\documentclass[a4paper]{article}
\usepackage{amsthm,amssymb,amsmath,enumerate}
\usepackage{algorithm}

\usepackage{cite}

\usepackage{xcolor}

% Fuer Tikz Grafiken
\usepackage{tikz}
\usetikzlibrary{calc}
\usetikzlibrary{backgrounds}

%\colorlet{auchblau}{blue!60!white}
%\colorlet{hellblau}{blue!40!white}
%\colorlet{hellrot}{red!70!white}
\colorlet{hellgrau}{black!20!white}
%\colorlet{nochhellergrau}{black!10!white}
\colorlet{dunkelgrau}{black!60!white}

%Variabel
\colorlet{grau}{black!40!white}
\colorlet{bold}{black} %red or black?
\tikzstyle{ledge}=[thick, grau]
\tikzstyle{rededge}=[very thick, bold]
\tikzstyle{lvertex}=[thick,circle,inner sep=0.cm, minimum size=2mm, fill=white, draw=grau]
\tikzstyle{redvx}=[thick,circle,inner sep=0.cm, minimum size=2mm, fill=white, draw=bold]

\tikzstyle{hvertex}=[thick,circle,inner sep=0.cm, minimum size=2mm, fill=white, draw=black]
\tikzstyle{hedge}=[very thick]
\tikzstyle{medge}=[thick]
\tikzstyle{harrow}=[thick,arrows=->]
\tikzstyle{darrow}=[thick,arrows=<-]
\tikzstyle{point}=[draw,circle,inner sep=0.cm, minimum size=1mm, fill=black]
\tikzstyle{pointer}=[thick,->,shorten >=2pt,color=dunkelgrau]
\tikzstyle{facebdry}=[color=auchblau, very thick] % face boundary
\tikzstyle{face}=[facebdry,fill=hellblau]
\tikzstyle{nface}=[color=hellblau,fill=hellblau,thick] % naked face, without boundary
\tikzset{>={latex}}
\tikzstyle{tinyvx}=[thick,circle,inner sep=0.cm, minimum size=1.3mm, fill=white, draw=black]
\tikzstyle{smallvx}=[hvertex,minimum size=1.7mm]

\pgfdeclarelayer{background}
\pgfdeclarelayer{foreground}
\pgfsetlayers{background, main, foreground}

\DeclareMathOperator*{\intr}{int}

%\newtheorem{definition}{Definition}
%\newtheorem{proposition}[definition]{Proposition}
%\newtheorem{theorem}[definition]{Theorem}
%\newtheorem{corollary}[definition]{Corollary}
%\newtheorem{lemma}[definition]{Lemma}
%\newtheorem{conjecture}[definition]{Conjecture}
%\newtheorem{question}[definition]{Question}
%\newtheorem{problem}[definition]{Problem}
%\newtheorem*{noCounterThm}{Theorem}
%\newtheorem{algo}[definition]{Algorithm}

%Für erneute Nennung von Theoremen
\newtheorem*{theo*wallsize}{Theorem~\ref{treew:theo:theoremAsWallSize}}
\newtheorem*{lemma*linkage}{Lemma~\ref{treew:lem:linkageMainLemma}}

\newcommand{\comment}[1]{}
\newcommand{\N}{{\mathbb Z}_+}

\newcommand{\emtext}[1]{\text{\em #1}}

\newcommand{\sm}{\setminus}

\title{Subcubic graphs of large treewidth do not have the edge-Erd\H{o}s-P\'{o}sa property}
\author{Henning Bruhn and Raphael Steck}
\date{\today}
%\date{25/04/22}

% Importiert

\usepackage{hyperref}
\hypersetup{
pdftitle={Subcubic graphs of large treewidth do not have the edge-Erdös-P\'{o}sa property}, % \H{o} not defined
pdfauthor={Raphael Steck},
pdfsubject={Edge-Erdös-P\'{o}sa property},
pdfproducer={pdfeTex 3.14159-1.30.6-2.2},
colorlinks=false,
pdfborder=0 0 0	% keine Box um die Links!
}

\usepackage{enumitem}

\theoremstyle{plain}
\newtheorem{theo}{Theorem}
\newtheorem{lemma}[theo]{Lemma}

\begin{document}

\maketitle

\begin{abstract}
We show that subcubic graphs of treewidth at least $2500$ do not have the edge-Erd\H{o}s-P\'{o}sa property.
\end{abstract}

%\setstretch{1.1}

%\tableofcontents

\section{Introduction}

Menger's theorem provides a strong duality between packing and covering for paths: In every graph $G$, there are either $k$ disjoint paths between predefined sets $A, B \subseteq V(G)$, or there is a set $X \subseteq V(G)$ of size at most $k$ such that $G - X$ contains no $A$--$B$~path. Relaxed versions of this result exist for many sets of graphs, and we call this duality the \emph{Erd\H{o}s-P\'{o}sa property}.
In this article, we focus on the edge variant: A class $\mathcal{F}$ has the \emph{edge-Erd\H{o}s-P\'{o}sa property} if there exists a function $f: \N \rightarrow \mathbb{R}$ such that for every graph $G$ and every integer $k$, there are $k$ edge-disjoint subgraphs of $G$ each isomorphic to some graph in $\mathcal{F}$ or there is an edge set $X \subseteq E(G)$ of size at most $f(k)$ meeting all subgraphs of $G$ isomorphic to some graph in $\mathcal{F}$. The edge set $X$ is called the \emph{hitting set}.
If we replace vertices with edges in the above definition, that is, if we look for a vertex hitting set or vertex-disjoint graphs, then we obtain the \emph{vertex-Erd\H{o}s-P\'{o}sa property}.
The class $\mathcal{F}$ that is studied in this article arises from taking minors: For a fixed graph $H$, we define the set 
\(
\mathcal{F}_H = \{ G \, : \, H \text{ is a minor of } G\}.
\)
Any graph $G \in \mathcal{F}_H$ is called an \emph{$H$-expansion}.

The vertex-Erd\H{o}s-P\'{o}sa property for $\mathcal{F}_H$ is well understood:
Robertson and Seymour \cite{robertson86} proved that the class $\mathcal{F}_H$ has the vertex-Erd\H{o}s-P\'{o}sa property if and only if $H$ is planar. While both the vertex- and the edge-Erd\H{o}s-P\'{o}sa property are false for all non-planar graphs $H$ (see for example \cite{raymond17}), the situation is much more mysterious for planar graphs. For some simple planar graphs $H$ such as long cycles\cite{BHJ19} or $K_4$\cite{BH18}, $\mathcal{F}_H$ still has the edge-Erd\H{o}s-P\'{o}sa property, while for some others, for example subcubic trees of large pathwidth\cite{bruhn18}, it does not. For most planar graphs, it is unknown whether the edge-Erd\H{o}s-P\'{o}sa property holds or not.
For an overview of results on the Erd\H{o}s-P\'{o}sa-property, we recommend the website of Jean-Florent Raymond \cite{raymondweb}.

We partially fill this gap by proving that for every subcubic graph of large treewidth $H$, $\mathcal{F}_H$ does not have the edge-Erd\H{o}s-P\'{o}sa property. Note that while it was known that large walls do not have the edge-Erd\H{o}s-P\'{o}sa property (claimed without proof in \cite{bruhn18}), this does not imply our main result as, unlike the vertex-Erd\H{o}s-P\'{o}sa property, is not known whether the edge variant is closed under taking minors.

\begin{theo} \label{treew:thm:noEEPwhenLargeWallContained}
For subcubic graphs $H$ of treewidth at least $2500$, $\mathcal{F}_H$ does not have the edge-Erd\H{o}s-P\'{o}sa property.
\end{theo}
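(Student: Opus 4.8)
The plan is to refute the edge-Erd\H{o}s-P\'{o}sa property directly, by producing for every integer $m$ a single graph $G_m$ with two properties: $G_m$ contains no two edge-disjoint $H$-expansions, while every edge set meeting all $H$-expansions of $G_m$ has at least $m$ edges. This suffices: if $\mathcal{F}_H$ had the edge-Erd\H{o}s-P\'{o}sa property with some function $f$, then applying it to $G_m$ with $k=2$ would force either two edge-disjoint $H$-expansions or a hitting set of size at most $f(2)$, and choosing $m>f(2)$ contradicts both alternatives. So the whole argument reduces to the construction of the graphs $G_m$ and the verification of their two properties.

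The first step is to replace $H$ by a wall. Since $H$ is subcubic of treewidth at least $2500$, Theorem~\ref{treew:theo:theoremAsWallSize} yields a subdivision of a wall $W$ of the size we need inside $H$; I fix one such $W\subseteq H$ once and for all, together with the decomposition of $H$ into $W$ and the remaining ``appendix'' $R$, a bounded collection of paths and small trees attached to $W$. The point of isolating $W$ is that walls --- unlike arbitrary subcubic graphs --- are rigid under taking expansions: in every $H$-expansion the image of $W$ is a large grid-like linkage, and it is this rigidity, not any feature of $H$ as a whole, that the construction exploits. The host $G_m$ is then assembled from roughly $m$ copies of a gadget built around $H$, glued to one another along interfaces prescribed by the rows and columns of $W$ and closed into one long, essentially one-dimensional strip. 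The gluing and a sufficiently generous subdivision of the connecting edges are arranged so that no bounded-size stretch of the strip already contains $H$ as a minor, so that every $H$-expansion is forced to spread its wall part along essentially the whole strip and to collect its appendix from the same long range, and so that the strip is wide enough to accommodate one such expansion but not two edge-disjointly.

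Granting the construction, the large-hitting-set property is the easier half: because $H$ is realizable across every window of consecutive gadgets, sliding such a window along the strip produces $\Omega(m)$ distinct $H$-expansions $X_1,X_2,\dots$, and by construction each edge of $G_m$ lies in only a bounded number of them; hence any edge set meeting all the $X_j$ has size $\Omega(m)$, which after scaling the gadgets gives the bound $m$. The delicate half is the bound on edge-disjoint packing, and this is exactly where Lemma~\ref{treew:lem:linkageMainLemma} enters: it asserts that a subgraph of $G_m$ realizing the wall $W$ must route the rows and columns of $W$ through the strip as an honest large linkage that cannot be short-circuited inside any bounded stretch of gadgets. Combined with the boundedness of the interfaces between consecutive gadgets, this forces the image of $W$ --- and with it the entire $H$-expansion --- to use up so much of the capacity of each interface that a second, edge-disjoint $H$-expansion cannot do the same; so $G_m$ has no two edge-disjoint $H$-expansions.

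The main obstacle is precisely this last point: controlling an \emph{arbitrary} $H$-expansion in $G_m$. A priori the branch sets of $H$ could be placed wildly, the subdivided connecting paths could be traversed in unexpected ways, and $W$ could be realized only partially inside each gadget; ruling all of this out, so that every $H$-expansion is genuinely pinned to following the strip, is the content of the linkage lemma and the technical heart of the paper. By contrast, the reduction to the existence of the graphs $G_m$, the passage from $H$ to the wall $W$, and the verification of the large-hitting-set property are comparatively routine once the construction is in place.
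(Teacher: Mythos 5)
Your overall strategy is right: to refute the edge-Erd\H{o}s-P\'osa property you build, for each $m$, a graph with no two edge-disjoint $H$-expansions but no small edge hitting set, and the only structural feature of $H$ that the construction uses is a large wall inside it. The reduction from treewidth to a wall via the grid-minor theorem also matches the paper. But the construction you sketch is not the paper's construction, and as described it has a genuine gap precisely at the crucial step.

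Your proposed host $G_m$ is a long strip of $\Theta(m)$ gadgets built around copies of $H$, and the ``no two edge-disjoint expansions'' claim is supposed to follow from a capacity argument on the bounded interfaces between consecutive gadgets. There is no mechanism here that forces two $H$-expansions to \emph{share an edge}: a capacity bound on interfaces can limit how many \emph{vertex}-disjoint expansions cross a cut, but edge-disjoint expansions may freely share vertices, and bounded interfaces do not in general bound the number of edge-disjoint crossings. In fact, Robertson--Seymour tells you $\mathcal F_H$ \emph{does} have the vertex-Erd\H{o}s-P\'osa property for planar $H$, so no construction of the type you describe can be made to work without an additional gadget whose entire purpose is to create an edge bottleneck that is simultaneously un-hittable by a small edge set. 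That gadget is the Heinlein Wall, and it is the single indispensable ingredient that you are missing. The paper's graph $Z$ is \emph{not} a strip of $m$ copies of $H$: it is one copy of $H$ with two far-apart wall edges $e_1,e_2$ deleted, every remaining edge replaced by $2r$ parallel length-two paths (this makes every edge outside the gadget un-hittable), and a single Heinlein Wall of size $2r$ attached whose four terminals play the roles of the endvertices of $e_1$ and $e_2$. The Heinlein Wall admits no two edge-disjoint $(a^*$--$b^*, c^*$--$d^*)$ linkages, yet $r$ edges cannot destroy all such linkages in a Heinlein Wall of size $2r$; that asymmetry is exactly what your proposal lacks. Finally, you misread Lemma~\ref{treew:lem:linkageMainLemma}: the $W$ in its statement is the Heinlein Wall in $Z$, not the wall inside $H$, and the lemma says every $H$-subdivision in $Z$ must route one linkage through that gadget. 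It does not say anything about routing rows and columns of the wall $W\subseteq H$ ``honestly through a strip.''

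So the high-level reduction is fine, but the heart of the proof --- building a graph in which every $H$-subdivision is forced through a single edge-disjointness bottleneck that nonetheless cannot be covered by few edges --- is not present in your proposal, and a generic ``strip of gadgets with bounded interfaces'' construction cannot supply it.
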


To prove Theorem~\ref{treew:thm:noEEPwhenLargeWallContained}, we only use treewidth to deduce that $H$ contains a large wall, for which we use the linear bound provided by Grigoriev \cite{grigoriev}. So in fact, we show the following theorem:
%\mymargin{cited paper due to Leaf and Seymour! much worse bounds!} - fixed

\begin{theo} \label{treew:theo:theoremAsWallSize}
For subcubic graphs $H$ that contain a wall of size $250\times 250$, $\mathcal{F}_H$ does not have the edge-Erd\H{o}s-P\'{o}sa property.
\end{theo}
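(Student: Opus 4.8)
The plan is to construct, for every $k$, a host graph $G = G(k)$ that contains no two edge-disjoint $H$-expansions but in which every edge set meeting all $H$-expansions is large — larger than any fixed function of $k$ could bound. Since $H$ is subcubic and contains a $250\times 250$ wall $W$, and walls themselves are essentially self-similar under subdivision, I would first reduce to controlling $H$-expansions via \emph{wall}-expansions: any $H$-expansion contains a $W$-expansion, so it suffices to build $G$ with no two edge-disjoint $H$-expansions while forcing many edges into every set that hits all $W$-expansions. The classical construction of this type (going back to the pathwidth/tree counterexamples of Bruhn, Joos, and collaborators) is to take a large wall-like ``frame'' and glue many disjoint copies of a gadget onto it in such a way that each copy is individually harmless but collectively they force the hitting set to be spread out. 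Concretely, I would take a very large wall $\Omega$ (of size roughly $N\times N$ with $N$ huge compared to $k$ and $250$), and at carefully chosen, pairwise far-apart locations attach pendant ``$H$-seeds'': small subcubic gadgets so that plugging any one seed into the ambient piece of $\Omega$ completes an $H$-expansion, but no two completions can be done edge-disjointly because they all must reuse a common bottleneck part of $\Omega$.

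The heart of the argument is an \emph{edge-linkage / rerouting lemma} for large walls: in a sufficiently large wall, any two ``terminal interfaces'' placed on the wall can be joined by edge-disjoint paths respecting prescribed patterns, and — crucially — one cannot destroy all such linkages by deleting few edges; deleting $t$ edges still leaves a large subwall untouched (since a wall of size $n$ has edge-connectivity properties that localize damage). This is exactly the kind of statement the paper flags as \verb|Lemma~\ref{treew:lem:linkageMainLemma}|, so I would invoke it as the key black box. The structure of the proof is then: (1) fix $k$, choose $N = N(k)$ large; (2) build $G(k)$ from an $N\times N$ wall plus $\Theta(N)$ far-separated $H$-seed gadgets; (3) show $\nu(G(k)) \le 1$, i.e. no two edge-disjoint $H$-expansions — this uses that any $H$-expansion must route its wall-part through the ambient wall and hence through a bounded bottleneck, so two of them would have to share an edge; (4) show $\tau(G(k))$ is at least, say, $\Theta(N)$ — this uses the linkage lemma: a hitting set of size $t$ leaves enough of the wall and at least one seed unscathed to still realize an $H$-expansion, contradiction unless $t$ is large; (5) since $N$ can be taken to grow with $k$ while $\nu \le 1$ stays fixed, no function $f$ with $f(k) \ge \tau(G(k))$ and $f$ finite can exist, so $\mathcal{F}_H$ fails the edge-Erd\H{o}s-P\'{o}sa property.

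The main obstacle, and where the real work lies, is step (3) combined with the subcubic constraint. Because $H$ is only subcubic, its expansions are ``thin'' — every branch vertex has degree at most $3$ — which simultaneously helps and hurts: it limits how an $H$-expansion can spread through $G$ (good for bounding $\nu$), but it also means the gadgets must be designed very carefully so that each seed genuinely completes to an $H$-expansion using only low-degree connections available in the wall, and so that the ``bottleneck'' through which all expansions must pass is both unavoidable and small. Making the bottleneck argument rigorous — proving that every $H$-expansion in $G(k)$ really is pinned down to reuse a specific set of edges, so that edge-disjointness of two expansions is impossible — is the delicate combinatorial core; this is presumably where the paper's wall-size bound $250$ (rather than something smaller) and the treewidth/wall translation via Grigoriev's linear bound get consumed. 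Everything else — the choice of $N(k)$, the counting in step (4), and the final contradiction in step (5) — is then routine given the linkage lemma.
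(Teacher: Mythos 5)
Your proposal contains a fundamental logical gap and misidentifies the construction. The reduction in your opening paragraph — "any $H$-expansion contains a $W$-expansion, so it suffices to ... [force] many edges into every set that hits all $W$-expansions" — is backwards: a set hitting all $H$-expansions need not hit all $W$-expansions (there may be $W$-expansions in $G$ that do not extend to $H$-expansions), so a lower bound on the cover number for $W$-expansions yields no lower bound for $H$-expansions. This is precisely the fallacy the paper flags in its introduction: the edge-Erd\H{o}s-P\'osa property is \emph{not} known to be minor-closed, and that is exactly why the known failure of the property for large walls does not already give the theorem. Your whole step (4) rests on this invalid reduction.

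Beyond that, the construction you sketch is not the paper's and, as stated, does not work. You propose a huge $N\times N$ wall with $\Theta(N)$ pendant ``$H$-seed'' gadgets, and assert in step (3) that every $H$-expansion must pass through a ``common bottleneck part'' of the ambient wall, so two expansions share an edge. But a plain wall is locally well-connected, and there is no reason such a shared bottleneck should exist; you never build one. The paper's construction goes the other way: take a copy of $H$ itself with two far-apart wall edges $e_1,e_2$ deleted, replace every remaining edge by $2r$ parallel length-$2$ paths (this kills small hitting sets essentially for free, no linkage lemma needed), and splice in a \emph{Heinlein wall} gadget across the four endpoints of $e_1,e_2$. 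The Heinlein wall is the deliberately engineered bottleneck: it admits a single $(a^*\text{--}b^*,\,c^*\text{--}d^*)$ linkage but no two edge-disjoint ones (Lemma~\ref{lem:noTwoLinkages}). The real work — which your proposal neither states nor replaces — is then Lemma~\ref{treew:lem:linkageMainLemma}: \emph{every} $H$-subdivision in this specific host graph $Z$ is forced to realize such a linkage inside the Heinlein wall, which requires the whole machinery of Lemmas~\ref{treew:lem:BonBStar}--\ref{treew:lem:ab-cd-pathInWWithEndsInSameComponent} (tracking where the $3$-fan-connected set $B$ is mapped, repairing a large subwall $M'^*$ inside $\Phi(H)-W^0$, and a planarity/apartness argument). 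Your reading of the linkage lemma as a generic ``rerouting/survivability'' statement about large walls is not what it says, and no analogue of the forcing argument appears in your outline.
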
 

There is room for improvement in the theorem. Requiring the graph $H$ to be subcubic simplifies the argument considerably, but we suspect it is not necessary. Moreover, we believe that with a more careful but somewhat tedious analysis the wall size could be dropped to about $30\times 30$. Still, this seems unlikely to be close to be best possible. Indeed, walls of size $6\times 4$ do not have the edge-Erd\H{o}s-P\'{o}sa property~\cite{steck23wallserdosposa}. (Whether  graphs containing  $6\times 4$-walls have the property is not known.)

\section{Construction} \label{sec:treew:construction}

There is only one known tool to prove that a set $\mathcal{F}_H$ of $H$-expansions that satisfies the vertex-Erd\H{o}s-P\'{o}sa property does not have the edge-Erd\H{o}s-P\'{o}sa property: The \emph{Heinlein Wall}, after \cite{bruhn18}, shown at size~5 in Figure~\ref{treew:fig:condWall}.

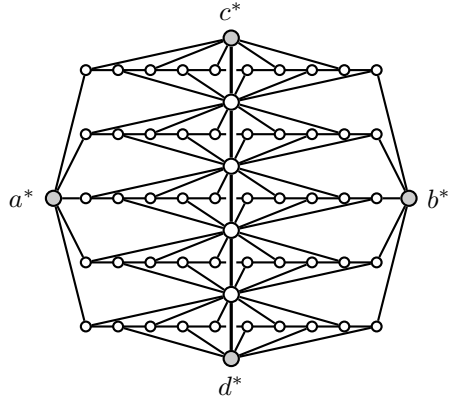
\begin{figure}[bht] %Heinlein Wall with a^* etc
\centering
\begin{tikzpicture}[scale=0.85]
\tikzstyle{tinyvx}=[thick,circle,inner sep=0.cm, minimum size=1.3mm, fill=white, draw=black]

\def\vstep{1}
\def\hstep{0.5}
\def\hwidth{9}
\def\hheight{4}

\def\totalheight{\hheight*\vstep}
\def\totalwidth{\hwidth*\hstep}
\pgfmathtruncatemacro{\minustwo}{\hwidth-2}
\pgfmathtruncatemacro{\minusone}{\hwidth-1}

\foreach \j in {0,...,\hheight} {
\draw[medge] (0,\j*\vstep) -- (\hwidth*\hstep,\j*\vstep);
\foreach \i in {0,...,\hwidth} {
\node[tinyvx] (v\i\j) at (\i*\hstep,\j*\vstep){};
}
}

\foreach \j in {1,...,\hheight}{
\node[hvertex] (z\j) at (0.5*\hwidth*\hstep,\j*\vstep-0.5*\vstep) {};
}
\pgfmathtruncatemacro{\plusvone}{\hheight+1}

\node[hvertex,fill=hellgrau,label=above:$c^*$] (z\plusvone) at (0.5*\totalwidth,\totalheight+0.5*\vstep) {};
\node[hvertex,fill=hellgrau,label=below:$d^*$] (z0) at (0.5*\totalwidth,-0.5*\vstep) {};

\foreach \j in {1,...,\plusvone}{
\pgfmathtruncatemacro{\subone}{\j-1}
\draw[line width=1.3pt,double distance=1.2pt,draw=white,double=black] (z\j) to (z\subone);
\foreach \i in {0,2,...,\hwidth}{
\draw[medge] (z\j) to (v\i\subone);
}
}

\foreach \j in {0,...,\hheight}{
\foreach \i in {1,3,...,\hwidth}{
\draw[medge] (z\j) to (v\i\j);
}
}

\pgfmathtruncatemacro{\minusvone}{\hheight-1}
\node[hvertex,fill=hellgrau,label=left:$a^*$] (a) at (-\hstep,0.5*\totalheight) {};
\foreach \j in {0,...,\hheight} {
\draw[medge] (a) -- (v0\j);
}

\node[hvertex,fill=hellgrau,label=right:$b^*$] (b) at (\totalwidth+\hstep,0.5*\totalheight) {};
\foreach \j in {0,...,\hheight} {
\draw[medge] (v\hwidth\j) to (b);
}
\end{tikzpicture}
\caption{A Heinlein Wall of size 5.}
\label{treew:fig:condWall}
\end{figure}

For any integer $n \in \N$, we define $[n] = \{1, \ldots, n\}$.
A Heinlein Wall $W$ of size $r \in \N$ is the graph consisting of the following:
\begin{itemize}
\item For every $j \in [r]$, let $P^j = u^j_1 \ldots u^j_{2r}$ be a path of length $2r - 1$ and for $j \in \{0\} \cup [r]$, let $z_j$ be a vertex. Moreover, let $a^*$, $b^*$ be two further vertices.
\item For every $i, j \in [r]$, add the edges $z_{j-1} u^j_{2i - 1}, z_{j} u^j_{2i}, z_{i-1} {z_i}, a^* u^j_{1}$ and $b^* u^j_{2r}$.
\end{itemize}
We define $c^* = z_0$ and $d^* = z_r$. % and refer to
We call the vertices $a^*,b^*,c^*$ and $d^*$ \emph{terminals} of $W$, while 
the vertices $z_j, j \in \{0\} \cup [r]$ are called \emph{bottleneck vertices}.
Additionally, we define $W^0 = W - \{a^*,b^*,c^*,d^*\}$.

An \emph{($a^*$--$b^*$, $c^*$--$d^*$) linkage} is the vertex-disjoint union of an $a^*$--$b^*$~path with a $c^*$--$d^*$~path.
We need an easy observation:
\begin{lemma}[Bruhn et al \cite{bruhn18}] \label{lem:noTwoLinkages}
There are no two edge-disjoint ($a^*$--$b^*$, $c^*$--$d^*$) linkages in a Heinlein Wall.
\end{lemma}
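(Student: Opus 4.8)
The plan is to assume for contradiction that $W$ contains two edge-disjoint linkages $L_1 = A_1 \cup C_1$ and $L_2 = A_2 \cup C_2$, where each $A_m$ is an $a^*$--$b^*$~path, each $C_m$ is a $c^*$--$d^*$~path, and $A_m, C_m$ are vertex-disjoint, and then to show that $C_2$ is forced to share an edge with $A_1$.

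First I would pin down the shape of an arbitrary $c^*$--$d^*$~path $C_m$ of a linkage. Since $A_m$ passes through both $a^*$ and $b^*$, its disjoint partner $C_m$ avoids $a^*$ and $b^*$; hence, once the bottleneck vertices are also removed, $C_m$ lives in the pairwise-disjoint union of the paths $P^1, \dots, P^r$. Listing the bottleneck vertices $z_{i_0} = z_0, z_{i_1}, \dots, z_{i_k} = z_r$ in the order $C_m$ meets them, each stretch of $C_m$ between two consecutive such vertices is either the spine edge $z_{i_t} z_{i_{t+1}}$ or a subpath contained in a single $P^j$. In the latter case the two end-edges of the stretch force $\{i_t, i_{t+1}\} = \{j-1, j\}$ (among the bottleneck vertices only $z_{j-1}$ and $z_j$ have a neighbour on $P^j$, and a stretch cannot both enter and leave $P^j$ via the same $z$ without repeating it), and the stretch then runs between an odd-indexed vertex $u^j_{2a-1}$ and an even-indexed vertex $u^j_{2b}$ of $P^j$, so it uses at least one edge of $P^j$. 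In both cases $|i_{t+1} - i_t| = 1$, so since the $z_{i_t}$ are distinct the sequence is exactly $0, 1, \dots, r$: thus $C_m$ threads all bottleneck vertices, in order, and for each $j \in [r]$ it crosses from $z_{j-1}$ to $z_j$ either along the spine edge $z_{j-1} z_j$ or transversally through $P^j$.

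Next I would determine $A_m$: as $C_m$ meets every bottleneck vertex, $A_m$ avoids all of them, and in $W$ with the bottleneck vertices deleted the vertex $a^*$ is adjacent only to $u^1_1, \dots, u^r_1$, the vertex $b^*$ only to $u^1_{2r}, \dots, u^r_{2r}$, and $P^1, \dots, P^r$ are disjoint with no further edges — so $A_m$ consists of $P^{j_m}$ together with the edges $a^* u^{j_m}_1$ and $b^* u^{j_m}_{2r}$, for some $j_m \in [r]$. Finally I combine the two linkages. Since $C_1$ avoids every vertex of $A_1$, in particular every vertex of $P^{j_1}$, its crossing from $z_{j_1-1}$ to $z_{j_1}$ cannot be transversal through $P^{j_1}$, so by the structural description it is the spine edge $z_{j_1-1} z_{j_1}$. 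Edge-disjointness of $L_1$ and $L_2$ then forbids $C_2$ to use this spine edge, so $C_2$ must cross from $z_{j_1-1}$ to $z_{j_1}$ transversally through $P^{j_1}$, hence uses an edge of $P^{j_1} \subseteq A_1$ — contradicting that $L_1$ and $L_2$ are edge-disjoint.

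I expect the structural step for $C_m$ to be the only real obstacle: one must rule out all the ways a $c^*$--$d^*$~path could wander — slipping through $a^*$ or $b^*$, re-entering a bottleneck vertex from the same side, or skipping levels — and the two key observations that make this clean are that vertex-disjointness from $A_m$ keeps $C_m$ away from $a^*$ and $b^*$, and that the paths $P^1, \dots, P^r$ are pairwise disjoint with each one adjacent to exactly two consecutive bottleneck vertices. Once that description is in hand, the identification of $A_m$ and the concluding edge-clash are short bookkeeping.
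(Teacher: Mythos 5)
Your argument is correct, and since the paper cites this lemma from Bruhn, Heinlein, and Joos \cite{bruhn18} without reproducing a proof, there is no in-paper proof to compare against; your write-up supplies a complete, self-contained argument.

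The three-stage structure you use is sound and, as far as I can tell, is essentially the canonical way to see this. A few points worth confirming explicitly, all of which you got right: the decomposition of $W-\{a^*,b^*,z_0,\dots,z_r\}$ into the disjoint $P^j$ forces each bottleneck-free stretch of a $c^*$--$d^*$~path into a single $P^j$; the parity pattern (odd indices adjacent to $z_{j-1}$, even indices to $z_j$) guarantees such a stretch consumes at least one $P^j$-edge and pins $\{i_t,i_{t+1}\}=\{j-1,j\}$; a sequence of \emph{distinct} indices starting at $0$, ending at $r$, with successive differences $\pm 1$ is necessarily $0,1,\dots,r$, so the path really does thread every bottleneck vertex in order; and once $A_m$ avoids all the $z_j$, it collapses to $a^*\,u^{j_m}_1\,P^{j_m}\,u^{j_m}_{2r}\,b^*$. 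The endgame then forces $C_1$ onto the spine edge $z_{j_1-1}z_{j_1}$ and forces $C_2$ into $P^{j_1}\subseteq A_1$, giving the contradiction. No gap that I can see.
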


For $m,n \in \N$, an \emph{elementary grid} of size $m \times n$ is a graph with vertices $v_{i,j}$ for all $i \in [m], j \in [n]$ and edges $v_{i,j} v_{i+1,j} \,\, \forall i \in [m-1], j \in [n]$ as well as $v_{i,j} v_{i,j+1} \,\, \forall i \in [m], j \in [n-1]$. A \emph{grid} is a subdivision of an elementary grid.

A wall is the subcubic variant of a grid. We define an elementary wall as an elementary grid with every second vertical edge removed. That is, an elementary wall of size $m \times n$ is an elementary grid of size $(m+1) \times (2n+2)$ with every edge $v_{i,2j} v_{i+1,2j} \, , i \in [m], i \text{ is odd}, j \in [n+1]$ and every edge $v_{i,2j-1} v_{i+1,2j-1} \, , i \in [m], i \text{ is even}, j \in [n+1]$ being removed. Additionally, we remove all vertices of degree~$1$ and their incident edges.
The \emph{$i^\text{th}$ row} of an elementary wall is the induced subgraph on $v_{i,1},\ldots, v_{i,2n+2}$ for $i\in [m+1]$ (ignore the vertices that have been removed); this is a path. 
There is a set of exactly $n+1$ disjoint paths between the first row and the $(m+1)^\text{th}$ row. These paths are the \emph{columns} of an elementary wall. The \emph{bricks} of an elementary wall are its $6$-cycles. (See Figure~\ref{fig:elwall})

\begin{figure}[htb] %el Wall Defs
\centering
\begin{tikzpicture}
\tikzstyle{tinyvx}=[thick,circle,inner sep=0.cm, minimum size=1.3mm, fill=white, draw=black]
\tikzstyle{vx}=[thick,circle,inner sep=0.cm, minimum size=1.6mm, fill=white, draw=black]
\tikzstyle{marked}=[line width=3pt,color=dunkelgrau]
\tikzstyle{point}=[thin,->,shorten >=2pt,color=dunkelgrau]
\tikzstyle{edg}=[draw,thick]

\def\wallheight{8}
\def\brickheight{0.4}

\pgfmathtruncatemacro{\lastrow}{\wallheight}
\pgfmathtruncatemacro{\penultimaterow}{\wallheight-1}
\pgfmathtruncatemacro{\lastrowshift}{mod(\wallheight,2)}
\pgfmathtruncatemacro{\lastx}{2*\wallheight+1}

\draw[edg] (\brickheight,0) -- (2*\wallheight*\brickheight+\brickheight,0);
\foreach \i in {1,...,\penultimaterow}{
  \draw[edg] (0,\i*\brickheight) -- (2*\wallheight*\brickheight+\brickheight,\i*\brickheight);
}
\draw[edg] (\lastrowshift*\brickheight,\lastrow*\brickheight) to ++(2*\wallheight*\brickheight,0);

\foreach \j in {0,2,...,\penultimaterow}{
  \foreach \i in {0,...,\wallheight}{
    \draw[edg] (2*\i*\brickheight+\brickheight,\j*\brickheight) to ++(0,\brickheight);
  }
}
\foreach \j in {1,3,...,\penultimaterow}{
  \foreach \i in {0,...,\wallheight}{
    \draw[edg] (2*\i*\brickheight,\j*\brickheight) to ++(0,\brickheight);
  }
}

% vertical path
\def\colind{5}
\foreach \j in {2,4,6}{
  \draw[marked] (\colind*\brickheight,\j*\brickheight-2*\brickheight) -- ++ (0,\brickheight) -- ++(-\brickheight,0) -- ++(0,\brickheight) -- ++(\brickheight,0);
}
\draw[marked] (\colind*\brickheight,6*\brickheight) -- ++ (0,\brickheight) -- ++(-\brickheight,0) -- ++(0,\brickheight);

\def\rowind{4}
\foreach \i in {1,...,\lastx}{
  \draw[marked] (\i*\brickheight-\brickheight,\rowind*\brickheight) -- ++(\brickheight,0);
}

\draw[marked] (2*\wallheight*\brickheight,1*\brickheight) -- ++(0,\brickheight) coordinate[midway] (brx)
-- ++(-2*\brickheight,0)
-- ++(0,-\brickheight) -- ++(2*\brickheight,0);

\foreach \i in {1,...,\lastx}{
  \node[tinyvx] (w\i w0) at (\i*\brickheight,0){};
}
\foreach \j in {1,...,\penultimaterow}{
  \foreach \i in {0,...,\lastx}{
    \node[tinyvx] (w\i w\j) at (\i*\brickheight,\j*\brickheight){};
  }
}
\foreach \i in {1,...,\lastx}{
  \node[tinyvx] (w\i w\lastrow) at (\i*\brickheight+\lastrowshift*\brickheight-\brickheight,\lastrow*\brickheight){};
}

\foreach \i in {2,4,...,\lastx}{
  \node[tinyvx,fill=white] (w\i w\lastrow) at (\i*\brickheight+\lastrowshift*\brickheight-\brickheight,\lastrow*\brickheight){};
}

\node[anchor=mid] (tr) at (\lastx*\brickheight+0.5,\wallheight*\brickheight+0.8){$1$\textsuperscript{st} row};
\draw[point,out=270,in=0] (tr) to (w\lastx w\wallheight);

%\node[anchor=mid] (nails) at (\lastx*\brickheight-1,\wallheight*\brickheight+0.8){nails};
%\draw[point,out=180,in=90] (nails) to (w10w\wallheight);
%\draw[point,out=190,in=90] (nails) to (w12w\wallheight);
%\draw[point,out=200,in=90] (nails) to (w14w\wallheight);

\node[anchor=mid] (vp) at (0,\wallheight*\brickheight+0.8){column};
\draw[point,out=0,in=90] (vp) to (w\colind w\wallheight);

\node[align=center] (hp) at (\lastx*\brickheight+1.2,\rowind*\brickheight+0.8){row};
\draw[point,out=270,in=0] (hp) to (w\lastx w\rowind);

\node[align=center] (br) at (\lastx*\brickheight+1.2,1*\brickheight+0.8){brick};
\draw[point,out=270,in=0] (br) to (brx);

\end{tikzpicture}
\caption{An elementary wall of size $8\times8$.}\label{fig:elwall}
\end{figure}
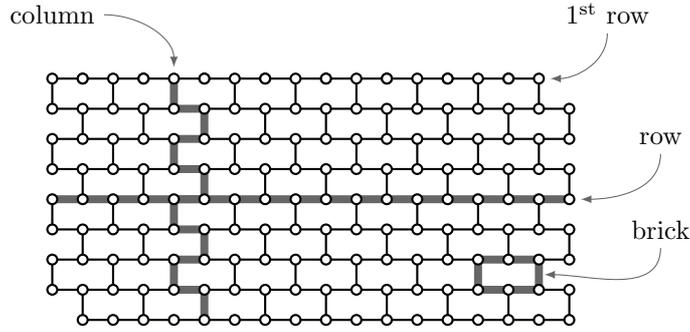

A \emph{wall} is defined as the subdivision of an elementary wall. However, elementary walls have some vertices of degree~$2$ on the outer face of the wall. As we never want to distinguish between graphs that only differ by subdivision of edges, we avoid some annoying technicalities by slightly modifying the above definition. We define a \emph{wall'} of size $m \times n$ as the subdivision of an elementary wall of size $m \times n$ with all degree~$2$ vertices being contracted. (See Figure~\ref{fig:WallvsWall'})
%\mymargin{I am not happy with wall'}

\begin{figure}[hbt] %Wall vs Wall'
\centering
\begin{tikzpicture}[scale=0.5]
\tikzstyle{tinyvx}=[thick,circle,inner sep=0.cm, minimum size=1.3mm, fill=white, draw=black]

\def\hstep{1}
\def\vstep{1}
\def\width{7}
\def\height{4}

\def\totalheight{\height*\vstep}
\pgfmathtruncatemacro{\wminusone}{\width-1}
\pgfmathtruncatemacro{\hminusone}{\height-1}

%% Elementary wall
\begin{scope}[shift={(-10,0)}]

%nodes
\foreach \i in {1,...,\width} {
	\node[tinyvx] (v\i0) at (\i*\hstep,0){};
}
\foreach \j in {1,...,\hminusone} {
	\foreach \i in {0,...,\width} {
		\node[tinyvx] (v\i\j) at (\i*\hstep,\j*\vstep){};
	}
}
\foreach \i in {0,...,\wminusone} {
	\node[tinyvx] (v\i\height) at (\i*\hstep,\totalheight){};
}

%horizontal edges
\foreach \i in {2,...,\width} {
	\pgfmathtruncatemacro{\subone}{\i-1}
	\draw[medge] (v\subone0) -- (v\i0);
}
\foreach \j in {1,...,\hminusone} {
	\foreach \i in {1,...,\width} {
		\pgfmathtruncatemacro{\subone}{\i-1}
		\draw[medge] (v\subone\j) -- (v\i\j);
	}
}
\foreach \i in {1,...,\wminusone} {
	\pgfmathtruncatemacro{\subone}{\i-1}
	\draw[medge] (v\subone\height) -- (v\i\height);
}

%vertical edges
\foreach \j in {0,2} {
	\foreach \i in {1,3,5,7} {
		\pgfmathtruncatemacro{\plusone}{\j+1}
		\draw[medge] (v\i\plusone) -- (v\i\j);
	}
}
\foreach \j in {1,3} {
	\foreach \i in {0,2,4,6} {
		\pgfmathtruncatemacro{\plusone}{\j+1}
		\draw[medge] (v\i\plusone) -- (v\i\j);
	}
}
\end{scope}

%% WALL'
%nodes
\foreach \i in {3,5} {
	\node[tinyvx] (v\i0) at (\i*\hstep,0){};
}
\foreach \j in {1,...,\hminusone} {
	\foreach \i in {1,...,\wminusone} {
		\node[tinyvx] (v\i\j) at (\i*\hstep,\j*\vstep){};
	}
}
\foreach \i in {2,4} {
	\node[tinyvx] (v\i\height) at (\i*\hstep,\totalheight){};
}
%\node[tinyvx] (a) at (0,1.5){};

%horizontal edges
\draw[medge] (v30) -- (v50);
\foreach \j in {1,2,3} {
	\foreach \i in {2,...,\wminusone} {
		\pgfmathtruncatemacro{\subone}{\i-1}
		\draw[medge] (v\subone\j) -- (v\i\j);
	}
}
\draw[medge] (v24) -- (v44);

%vertical edges
\foreach \j in {0,2} {
	\foreach \i in {3,5} {
		\pgfmathtruncatemacro{\plusone}{\j+1}
		\draw[medge] (v\i\plusone) -- (v\i\j);
	}
}
\foreach \j in {1,3} {
	\foreach \i in {2,4} {
		\pgfmathtruncatemacro{\plusone}{\j+1}
		\draw[medge] (v\i\plusone) -- (v\i\j);
	}
}
\draw[medge] (v61) -- (v62);
\draw[medge] (v12) -- (v13);
\draw[medge] (v13) to [out=180,in=270] (0.2,3.5) to [out=90,in=180] (v24);
\draw[medge] (v61) to [out=0,in=90] (6.8,0.5) to [out=270,in=0] (v50);
\draw[medge] (v11) to [out=180,in=270] (0.2,1.5) to [out=90,in=180] (v12);
\draw[medge] (v62) to [out=0,in=270] (6.8,2.5) to [out=90,in=0] (v63);
\draw[medge] (v30) to [out=180,in=270] (v11);
\draw[medge] (v44) to [out=0,in=90] (v63);

%\node[tinyvx] (m1) at (2.5,2){};
%\node[tinyvx] (m2) at (4,1.5){};

\end{tikzpicture}
\caption{An elementary wall of size $4 \times 3$ and a wall' of the same size.}
\label{fig:WallvsWall'}
\end{figure}
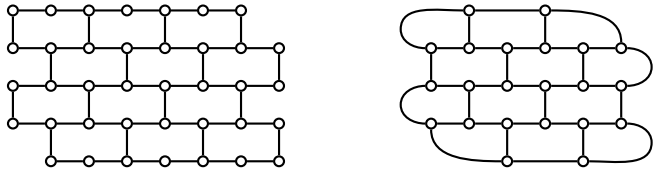

Throughout, we will use this slightly modified definition of a wall.
The key properties of a wall, such as large treewidth and planarity, carry over to a wall'. %More reassurement needed?
The definition of rows, columns and bricks in an elementary wall carries over to a wall' in a natural way (with some truncation of the first and last row and column). For brevity of notation, we define an \emph{$n$-wall'} as a wall' of size~$n\times n$. 

The \emph{outercycle} of a wall' $W$ is the cycle $C$ contained in $W$ that contains the first and last row and first and last column. 
Two vertices $u,v$ of $W$ are \emph{$d$-apart in $W$} if every $u$--$v$~path in $W$, every $u$--$C$~path and every $v$--$C$~path in $W$ intersects at least~$d+1$ rows or at least $d+1$ columns of $W$.
We extend the definition to bricks by saying that two bricks $B_1,B_2$ of $W$ are \emph{$d$-apart in $W$}
if every pair of one vertex from $B_1$ and one vertex from $B_2$ is $d$-apart in $W$. Note that if $v_1,v_2$ are $d$-apart and if $v_1$ lies in the brick $B_1$, and $v_2$ in the brick $B_2$ then $B_1,B_2$ are $(d-2)$-apart.

Note, furthermore, that if $W$ is part of a planar graph $G$ then there are no shortcuts in $G$. That is, if $u,v$ are $d$-apart in $W$ then there is also no $u$--$v$~path \emph{in} $G$ that meets fewer than $d+1$ rows and columns of $W$, and the same holds true for paths from $u$ or $v$ to the outercycle.

To apply Menger's theorem, for $n \in \N$ and vertex sets $A$ and $B$ in a graph $G$, we define an \emph{$n$-separator} as a vertex set $X \subseteq V(G)$ of size $|X| \leq n$ such that there is no $A$--$B$~path in $G-X$. We will usually apply this for one side being a single vertex, that is $A = \{a\}$, in which case we additionally require that $a \not\in X$.
%DEF $A$-path 
%\mymargin{Do we even need $A$-paths?}

\section{Large treewidth results} \label{sec:subcubicResults}

How do we prove our main result? Let $H$ be a planar subcubic graph of treewidth $\geq 2500$.
Given a size $r$ of a hypothetical hitting set, we show that there is a graph $Z$ that neither contains two edge-disjoint subdivisions of $H$, nor admits an edge set $U$ of size $|U|\leq r$ such that $Z-U$ is devoid of subdivisions of $H$.
That then proves that $\mathcal{F}_H$ does not have the edge-Erd\H os-P\'osa property.

Since $H$ has treewidth $\geq 2500$, it contains a grid-minor of size at least $501 \times 501$ \cite{grigoriev} and thus a wall' $M$ of size at least $250 \times 250$. %\mymargin{adjust wall size to 250}	-- done
We pick two edges $e_1$ and $e_2$ of $M$ such that both of them are incident with a branch vertices of degree~$3$ of $M$ and such that
\begin{equation}\label{e1e2distance}
\begin{minipage}[c]{0.8\textwidth}\em
every pair of one endvertex from $e_1$ and one endvertex from $e_2$ is $70$-apart in $M$.
\end{minipage}\ignorespacesafterend 
\end{equation} 
As $H$ is planar and $M$ large enough it is possible to find such edges $e_1,e_2$.
We denote the endvertex of $e_1$ that is also a branch vertices of degree~$3$ of $M$ by $a$, and the other endvertex by $b$ (which may, or not, be a branch vertex, too). 
For $e_2$, we call its endvertices $c$ and $d$, where $c$ is chosen to be a branch vertex of degree~$3$ of $M$.

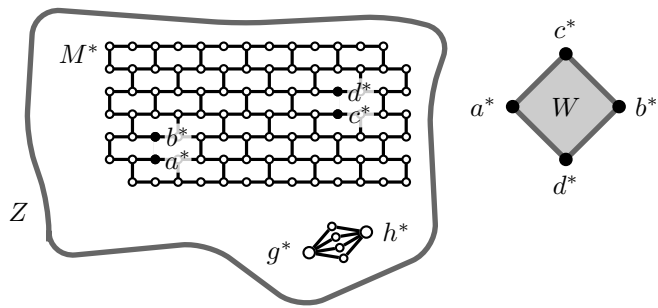
\begin{figure}[htb] %Construction of Counterexample Z
\centering
\begin{tikzpicture}
\tikzstyle{smallvx}=[thick,circle,inner sep=0.cm, minimum size=1mm, fill=white, draw=black]
\tikzstyle{vx}=[thick,circle,inner sep=0.cm, minimum size=1.5mm, fill=white, draw=black,fill=black]

\def\wallheight{6}
\def\brickheight{0.3}

\pgfmathtruncatemacro{\lastrow}{\wallheight}
\pgfmathtruncatemacro{\penultimaterow}{\wallheight-1}
\pgfmathtruncatemacro{\lastrowshift}{mod(\wallheight,2)}
\pgfmathtruncatemacro{\lastx}{2*\wallheight+1}

\draw[hedge] (\brickheight,0) -- (2*\wallheight*\brickheight+\brickheight,0);
\foreach \i in {1,...,\penultimaterow}{
  \draw[hedge] (0,\i*\brickheight) -- (2*\wallheight*\brickheight+\brickheight,\i*\brickheight);
}
\draw[hedge] (\lastrowshift*\brickheight,\lastrow*\brickheight) to ++(2*\wallheight*\brickheight,0);

\foreach \j in {0,2,...,\penultimaterow}{
  \foreach \i in {0,...,\wallheight}{
    \draw[hedge] (2*\i*\brickheight+\brickheight,\j*\brickheight) to ++(0,\brickheight);
  }
}
\foreach \j in {1,3,...,\penultimaterow}{
  \foreach \i in {0,...,\wallheight}{
    \draw[hedge] (2*\i*\brickheight,\j*\brickheight) to ++(0,\brickheight);
  }
}

\foreach \i in {1,...,\lastx}{
  \node[smallvx] (w\i w0) at (\i*\brickheight,0){};
}
\foreach \j in {1,...,\penultimaterow}{
  \foreach \i in {0,...,\lastx}{
    \node[smallvx] (w\i w\j) at (\i*\brickheight,\j*\brickheight){};
  }
}
\foreach \i in {1,...,\lastx}{
  \node[smallvx] (w\i w\lastrow) at (\i*\brickheight+\lastrowshift*\brickheight-\brickheight,\lastrow*\brickheight){};
}

\draw[ultra thick, white] (w2w1) to (w2w2);

\draw[ultra thick, white] (w10w3) to (w10w4);

\begin{scope}[opacity=0.8]
\fill[white] (w3w1.center) circle [radius=5pt];
\fill[white] (w3w2.center) circle [radius=5pt];
\fill[white] (w11w3.center) circle [radius=5pt];
\fill[white] (w11w4.center) circle [radius=5pt];
\end{scope}
\node at (w3w1.center) {$a^*$};
\node[smallvx,fill=black] at (w2w1) {};

\node at (w3w2.center) {$b^*$};
\node[smallvx,fill=black] at (w2w2) {};

\node at (w11w3.center) {$c^*$};
\node[smallvx,fill=black] at (w10w3) {};

\node at (w11w4.center) {$d^*$};
\node[smallvx,fill=black] at (w10w4) {};

\node at (-0.4,1.7) {$M^*$};

\begin{scope}[shift={(6,1)}]
\def\size{0.7}
\draw[line width=2pt,dunkelgrau,fill=hellgrau] (-\size,0) to (0,-\size) to (\size,0) to (0,\size) to cycle;
\node[vx,label=left:$a^*$] (a) at (-\size,0){};
\node[vx,label=right:$b^*$] (b) at (\size,0){};
\node[vx,label=above:$c^*$] (c) at (0,\size){};
\node[vx,label=below:$d^*$] (d) at (0,-\size){};
\node at (0,0) {$W$};
\end{scope}

\begin{scope}[shift={(3,-0.8)},rotate=20]
\tikzstyle{vvx}=[thick,circle,inner sep=0.cm, minimum size=1.5mm, fill=white, draw=black]

\def\blubb{0.4}
\node[vvx,label=left:$g^*$] (g) at (-\blubb,0){};

\node[vvx,label=right:$h^*$] (h) at (\blubb,0){};

\def\blabb{0.15}
\foreach \i in {0,...,3}{
  \node[smallvx] (xx\i) at (0,\i*\blabb-1.5*\blabb) {};
  \draw[hedge] (g) to (xx\i) to (h);
}
\end{scope}

\tikzstyle{border}=[line width=2pt, dunkelgrau, rounded corners=10pt]
%\tikzstyle{border}=[line width=2pt, dunkelgrau]

\draw[border] (-0.8,-1) to (1.3,-0.8) to (2.5,-1.7) to (4.3,-0.9) to (4.2,1.1) to (4.5,2)
to (3.7,2.2) to (1.7,2.1) to (-1,2.3) to (-1.1,0.6) 
to (-0.8,-0.3) to cycle;

\node at (-1.2,-0.4) {$Z$};

\end{tikzpicture}
\caption{Construction of the counterexample graph $Z$}
\label{fig:constructionOfZ}
\end{figure}

Given a positive integer $r$, we define $Z$ as follows:
\begin{itemize}
\item start with a copy of $H-\{e_1,e_2\}$, where we denote the copy of a vertex $h$ of $H$ by $h^*$;
\item replace every edge $g^*h^*$ in the copy of $H-\{e_1,e_2\}$ by $2r$ internally disjoint $g^*$--$h^*$~paths of length~$2$; and
\item add a Heinlein wall $W$ of size~$2r$, where the terminals $a^*,b^*$ of $W$ are identified with the endvertices of $e_1$, and where the terminals $c^*,d^*$ are identified with the endvertices of $e_2$.
\end{itemize}
A depiction of $Z$ can be seen in Figure~\ref{fig:constructionOfZ}.

We extend the mapping $V(H)\to V(Z)$ defined by $h\mapsto h^*$ to sets of vertices %and to paths
in  $H-\{e_1,e_2\}$:
for a vertex set $J\subseteq V(H)$, we set $J^*=\{h^*:h\in J\}$.

To better to distinguish between $H$ and $Z$,
we use the first half of the alphabet ($a$--$m$) for vertices, vertex sets and graphs that are part of $H$, while the second half of the alphabet ($o$--$z$) is reserved for objects belonging to $Z$.
Starred letters of the first half ($a^*$--$m^*$) are used for vertices and objects in $Z$ that have counterparts in $H$. 
We define $M^*$ to be an arbitrary subdivision of $M - \{e_1, e_2\}$ in $Z$ such that the set of its branch vertices is precisely $(V(M))^*$ and such that each subdivided edge of $M^*$ consists of one of the  $2r$ paths originating from multiplying the corresponding edge of $M - \{e_1, e_2\}$. Note that $M^*$ is a wall' except for $e_1, e_2$, and note that $M^*$ is disjoint from $W^0$.

Let us first prove the first half of Theorem~\ref{treew:thm:noEEPwhenLargeWallContained}:
there is no small edge hitting set in $Z$.
\begin{lemma}\label{nosmallhittingsetlem}
For every edge set $U$ in $Z$ of size $|U|\leq r$, the graph $Z-U$ contains a subdivision of $H$.
\end{lemma}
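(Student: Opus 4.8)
The plan is to show that any edge set $U$ with $|U|\le r$ can destroy at most one of the two "ingredients" that $Z$ needs to host a subdivision of $H$: the multiplied copy of $M-\{e_1,e_2\}$ on the one hand, and a path through the Heinlein wall realizing the missing edges $e_1,e_2$ on the other. First I would observe that because every edge $g^*h^*$ of $H-\{e_1,e_2\}$ was replaced by $2r$ internally disjoint paths of length $2$, and $|U|\le r<2r$, at least $r$ of those $2r$ parallel paths survive in $Z-U$ for each edge; in particular at least one survives, so $Z-U$ still contains a subdivision of $H-\{e_1,e_2\}$ in which each edge $g^*h^*$ is realized by a length-$2$ path between $g^*$ and $h^*$, and we may even insist this subdivision uses only vertices of the multiplied copy (call the resulting subdivision $H'-$, or reuse $M^*$-type notation). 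It then remains to route, edge-disjointly from this subdivision and avoiding $U$, an $a^*$--$b^*$ path and a $c^*$--$d^*$ path through the Heinlein wall $W$ — which, since $W^0$ is disjoint from everything outside it, reduces to finding an $(a^*\text{--}b^*,c^*\text{--}d^*)$ linkage in $W-U'$ where $U'=U\cap E(W)$.

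The key step is thus: for every edge set $U'$ in the Heinlein wall $W$ of size $2r$ with $|U'|\le r$, there is an $(a^*\text{--}b^*,c^*\text{--}d^*)$ linkage in $W-U'$. I would prove this directly from the structure of $W$. Recall $W$ has $2r$ paths $P^1,\dots,P^{2r}$ (one per "column", of length $2r-1$), bottleneck vertices $z_0,\dots,z_{2r}$, and terminals $a^*$ (adjacent to all $u^j_1$), $b^*$ (adjacent to all $u^j_{2r}$), $c^*=z_0$, $d^*=z_{2r}$. For the $c^*$--$d^*$ path, take the path $z_0z_1\cdots z_{2r}$ through the bottleneck vertices; this has $2r$ edges, so if $|U'|\le r$ it can be blocked only if $U'$ hits it, but I have freedom to reroute: each step $z_{i-1}z_i$ can be bypassed through any path $P^j$ via $z_{i-1}u^j_{2i-1}$, $u^j_{2i-1}u^j_{2i}$, $z_i u^j_{2i}$. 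More robustly, I would argue that the "vertical" structure (the $z_i$'s plus the rungs of the $P^j$'s) contains many edge-disjoint $c^*$--$d^*$ paths, more than $r$, so one avoids $U'$; simultaneously $a^*$--$b^*$ paths run "horizontally" along a chosen $P^j$ (length $2r-1$) together with the edges $a^*u^j_1$ and $b^*u^j_{2r}$, and there are $2r$ such internally disjoint candidate paths $\{a^*\}\cup P^j\cup\{b^*\}$, so again one avoids $U'$. The real content is choosing these two paths so that they are moreover \emph{vertex-disjoint} from each other while both avoiding $U'$: picking the horizontal path along $P^{j_0}$ forbids the two rungs of $P^{j_0}$ to the chosen vertical path, but since there are $2r$ columns and at most $r$ are "damaged" by $U'$, a counting/pigeonhole argument over columns (and over the $2r$ many bottleneck segments) yields a consistent choice.

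The main obstacle I anticipate is exactly this simultaneity — ensuring the $a^*$--$b^*$ path and the $c^*$--$d^*$ path are vertex-disjoint, avoid $U'$, \emph{and} are edge-disjoint from the surviving multiplied grid $M^*$. The last point is the easiest to handle: $M^*$ lies entirely outside $W^0$, sharing with $W$ only the terminals $a^*,b^*,c^*,d^*$, and the edges $e_1=ab$, $e_2=cd$ of $M$ are precisely the edges we deleted, so the wall paths only touch $M^*$ at the four identified terminals and no edge-conflict arises. The vertex-disjointness-plus-$U'$-avoidance inside $W$ is where I would spend the effort: I would set it up as choosing one "good" column index $j_0$ (undamaged by $U'$, giving the horizontal $a^*$--$b^*$ path along $P^{j_0}$), and then building the $c^*$--$d^*$ path segment-by-segment through the $z_i$'s, bypassing any segment $z_{i-1}z_i \in U'$ through a column $j\ne j_0$ whose relevant rungs $z_{i-1}u^j_{2i-1}$, $u^j_{2i-1}u^j_{2i}$, $z_iu^j_{2i}$ are not in $U'$ — possible since at each index at most $|U'|\le r<2r-1$ columns are blocked. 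Finally, combining the surviving subdivision of $H-\{e_1,e_2\}$ with the $a^*$--$b^*$ path (playing the role of $e_1$) and the $c^*$--$d^*$ path (playing the role of $e_2$) gives a subdivision of $H$ in $Z-U$, as desired.
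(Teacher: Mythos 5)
Your overall strategy matches the paper's (very brief) proof: the $2r$ parallel paths ensure $Z-U$ retains a subdivision of $H-e_1-e_2$ disjoint from $W^0$, and one completes it by an ($a^*$--$b^*$, $c^*$--$d^*$) linkage in $W-U$. The paper asserts without detail that $U$ is too small to hit all such linkages; you try to supply the detail, which is welcome, but you have misread the Heinlein wall. In $W$ the bottleneck vertex $z_{j-1}$ is adjacent only to the odd-indexed vertices of $P^j$ (and to the even-indexed vertices of $P^{j-1}$), while $z_j$ is adjacent only to the even-indexed vertices of $P^j$. So the only way to detour around a removed bottleneck edge $z_{i-1}z_i$ is through $P^i$ itself; the edges $z_{i-1}u^j_{2i-1}$ and $z_iu^j_{2i}$ you invoke for a general $j$ simply do not exist unless $j=i$. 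Consequently your claim that a damaged segment $z_{i-1}z_i$ can be "bypassed through any path $P^j$ with $j\ne j_0$" is false, and your argument as written fails if, say, $U$ consists of the single edge $z_{j_0-1}z_{j_0}$: your criterion for choosing $j_0$ (only that $P^{j_0}$ be undamaged) does not rule this out, yet the forced detour would have to pass through $P^{j_0}$.

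The fix is a slightly sharper pigeonhole. Declare row $j$ \emph{damaged} if $U$ contains an edge of $P^j$, or $a^*u^j_1$, or $b^*u^j_{4r}$, or the bottleneck edge $z_{j-1}z_j$ (whose detour is forced through $P^j$). Each edge of $U$ damages at most one row, so with $|U|\le r<2r$ an undamaged row $j_0$ exists; route $a^*$--$b^*$ along $P^{j_0}$. Then take $z_0z_1\cdots z_{2r}$ for $c^*$--$d^*$, and whenever $z_{i-1}z_i\in U$ (necessarily $i\neq j_0$) detour through $P^i$ via a rung pair $z_{i-1}u^i_{2k-1}$, $u^i_{2k-1}u^i_{2k}$, $z_iu^i_{2k}$ that avoids $U$, which exists since there are $2r$ choices of $k$ and $|U|<2r$. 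Distinct detours lie in distinct rows $P^i$ with $i\ne j_0$, so the two paths are vertex-disjoint. With this correction the remainder of your proposal is sound and coincides with the paper's approach.
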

\begin{proof}
As for every edge $gh\in E(H)\sm \{e_1,e_2\}$, the vertices $g^*$ and $h^*$ are linked by $2r$ internally disjoint paths, we may easily find a subdivision of $H-\{e_1,e_2\}$ in $Z-U$. Moreover, $U$ is too small to meet all ($a^*$--$b^*$, $c^*$--$d^*$) linkages in the Heinlein wall $W$. Thus, the subdivision of $H-\{e_1,e_2\}$ can be extended to one of $H$ in $Z-U$.
\end{proof}
The harder part of Theorem~\ref{treew:thm:noEEPwhenLargeWallContained} is to prove that there can be no two edge-disjoint subdivisions of $H$ in $Z$. 
We will prove:
\begin{lemma} \label{treew:lem:linkageMainLemma}
Every subdivision of $H$ in $Z$ contains an ($a^*$--$b^*$, $c^*$--$d^*$) linkage in~$W$.
\end{lemma}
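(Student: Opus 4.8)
The plan is to argue by contradiction: suppose some subdivision $S$ of $H$ in $Z$ uses \emph{fewer} than two ``strands'' through the Heinlein wall, i.e.\ $S$ does not contain an $(a^*$--$b^*,c^*$--$d^*)$ linkage in $W$, and derive a contradiction with the large treewidth (equivalently, the large wall $M^*$) that $S$ must still realize. The key structural observation is that in $Z$ the only way to pass from one ``side'' of the wall region to the other, without going the long way around through the multiplied copy of $H-\{e_1,e_2\}$, is through the Heinlein wall $W$; and Lemma~\ref{lem:noTwoLinkages} tells us $W$ can carry at most one linkage's worth of traffic between the four terminals. So first I would set up notation for $S$: it has branch vertices (the images of $V(H)$) and subdivided edges; I would look at how $S$ interacts with $W^0$ and with $M^*$.

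The heart of the argument should be a routing/separation statement. Since $H$ is subcubic, every branch vertex of $S$ has degree at most $3$, and the edges $e_1=ab$, $e_2=cd$ of $H$ that were deleted and ``replaced'' by $W$ must be realized in $S$ by two paths $Q_1$ (an $a^*$--$b^*$ path, after identifying the images) and $Q_2$ (a $c^*$--$d^*$ path) — more precisely, the subdivision $S$ must contain paths joining the $S$-images of $a,b$ and of $c,d$ that are otherwise internally disjoint from the rest of $S$ except possibly where $H$ itself forces incidences. The claim to prove is that these two paths, restricted to the wall region, together give the desired linkage in $W$. The obstacle is that $a priori$ $Q_1$ and $Q_2$ need not stay inside $W$: they could leave $W$, wander into $M^*$ or into the multiplied $H$-part, and come back. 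Here I would use planarity of $Z$ (inherited from $H$ planar, $W$ planar, glued along the four terminals) together with the $70$-apartness condition \eqref{e1e2distance} and the ``no shortcuts'' remark: the four terminals $a^*,b^*,c^*,d^*$ sit far apart in $M^*$, so any path leaving $W$ and re-entering it would have to traverse many rows/columns of $M^*$, but those rows and columns are already (essentially) used up by the subdivision $S$ of $H$ elsewhere, since $S$ must contain a full subdivision of the wall $M$. Counting degrees at branch vertices (subcubic!) then shows there is no room for such a detour, forcing $Q_1,Q_2$ to run inside $W$.

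Concretely, the steps in order: (1) Fix $S$ and identify within it the two ``bridges'' corresponding to $e_1$ and $e_2$; show they must enter the closed region bounded by the outercycle-like separator around $W$. (2) Using that $Z-W^0$ is, up to subdivision, $H-\{e_1,e_2\}$ with edges multiplied, observe that outside $W^0$ the graph is ``thin'': a subdivision of $H-\{e_1,e_2\}$ uses one strand per edge, and the remaining strands plus $W$ cannot reconnect $a^*,b^*$ to $c^*,d^*$ in a way that avoids routing through $W$ — this is where I would invoke a small separator (an $n$-separator in the sense defined) isolating the four terminals, whose existence comes from planarity and the $70$-apartness. (3) Conclude that $S$ restricted to $W\cup\{a^*,b^*,c^*,d^*\}$ contains both an $a^*$--$b^*$ path and a vertex-disjoint $c^*$--$d^*$ path, i.e.\ an $(a^*$--$b^*,c^*$--$d^*)$ linkage in $W$. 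The main difficulty I anticipate is making step (2) rigorous: ruling out ``clever'' detours that use the $2r$-fold edge multiplications to sneak extra connectivity around the terminals. I expect the subcubic hypothesis on $H$ is exactly what kills these detours — each branch vertex of $S$ has only three strands emanating, so it cannot serve as a junction for both the wall-subdivision $M^*$ and an auxiliary rerouting path — and the $70$-apartness ensures the terminals are genuinely ``internal'' to the wall, far from its boundary, so a rerouting path would have to cut across many columns of $M^*$ that $S$ has already committed to its own structure. Quantitatively, this is where the constants ($70$, the wall size $250$, the Heinlein wall size $2r$) get used, and where a careful but somewhat tedious case analysis on how the three strands at $a,b,c,d$ are distributed will be needed.
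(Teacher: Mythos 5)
Your proposal correctly identifies the high-level ingredients — planarity, $70$-apartness, the subcubic bound, Lemma~\ref{lem:noTwoLinkages} — and the final planarity contradiction you sketch (a detour outside $W$ would have to cut across a large wall) is indeed the shape of the paper's endgame. But step~(2) is not a ``careful but somewhat tedious case analysis''; it is precisely where the real difficulty lives, and your proposal has a genuine gap there. You write that ``outside $W^0$ the graph is thin'' and that ``those rows and columns are already (essentially) used up by the subdivision $S$ of $H$ elsewhere, since $S$ must contain a full subdivision of the wall $M$''. Neither of these is available: $Z-W^0$ is not thin (every edge of $H-\{e_1,e_2\}$ was replaced by $2r$ parallel paths, so it has huge edge-connectivity and enormous vertex degrees), and we have no a priori information about \emph{where} $\Phi$ places the subdivision of $M$ — the branch vertex $\Phi(h)$ need not be $h^*$, the image $\Phi(M)$ could in principle thread through $W^0$ and the multiplied edges in arbitrary ways. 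Your proposal tacitly assumes that the wall image sits roughly where $M^*$ does, which is exactly what one has to earn.

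The paper earns it through machinery you do not have. It introduces the set $B$ of vertices of $H$ with a $3$-fan to the proper branch vertices of some $10$-wall', proves $\Phi(B)\subseteq B^*\cup V(W)$ (Lemma~\ref{treew:lem:BonBStar}), and shows $|B^*\sm\Phi(B)|\le 52$ (Lemmas~\ref{treew:lem:XeqY}, \ref{treew:lem:XisSmall}) — i.e.\ the embedding $\Phi$ essentially ``pins down'' $B$ to $B^*$, modulo a bounded number of exceptions. Then, rather than locating $\Phi(M)$, it \emph{rebuilds} a large wall $R\subseteq\Phi(H)-W$ from scratch near $M^*$ using a ``repair lemma'' (Lemma~\ref{treew:lem:pathsBetweenBranchverticesInMStar}), applied edge by edge to a subwall $M'\subseteq M$ chosen to avoid the $\le 52$ exceptional vertices and the terminals. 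A preparatory dichotomy (Lemma~\ref{treew:lem:ab-cd-pathInWWithEndsInSameComponent}) then yields both a path $P\subseteq\Phi(H)\cap W$ and a parallel path $Q\subseteq\Phi(H)-W^0$ between the same pair of terminals, and only then does the planarity argument you anticipated close the proof: $Q$ must hit $R$, its first and last hits are $8$-apart in $R$ by the $70$-apartness, yet $q_1Qv_1\cup P\cup v_2Qq_2$ bypasses $R$ entirely, which is impossible in a plane graph. Without the $B$-machinery and the repair lemma, your ``no room for a detour'' claim has no foothold; the subcubicity alone does not rule out $\Phi$ re-routing the wall image far away from $M^*$.
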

Recall that, by Lemma~\ref{lem:noTwoLinkages}, any two such linkages share an edge. Thus, once we have shown the above lemma, we then have finished the proof of the theorem.

When we talk about a subdivision of $H$ in $Z$, we implicitly assume that an \emph{embedding} of $H$ into $Z$ is fixed: 
a function $\Phi$ that maps every vertex of $H$ to the corresponding branch vertex in $Z$, and that maps every edge of $H$  to the corresponding subdivided edge in $Z$. We will extend such an embedding $\Phi$ to subgraphs of $H$ in the obvious way. In particular, $\Phi(H)$ then denotes the subdivision of $H$ in $Z$.

For the remainder of this article, we assume $\Phi$ to be a fixed embedding of $H$ in $Z$. We will prove Lemma~\ref{treew:lem:linkageMainLemma} for this fixed embedding of $H$.
The main difficulty is that we do not know how $H$ embeds in $Z$.
In order to get some control on what is mapped where by $\Phi$, we concentrate on a set of vertices that are well connected to large walls'. 
We will later see that only a small number of them can be mapped into $W$.
We define a \emph{$3$-fan} from a vertex $v$ to a set $S$ as the union of three non-trivial paths from $v$ to $S$ that are disjoint except for their first vertex $v$.
Set
\begin{align*}
B =  \{& h\in V(H) \, : \, \text{there is $10$-wall' $M'$ and}\\& \text{a $3$-fan from $h$ to the branch vertices of degree~$3$ of $M'$}\}.
\end{align*}
Note that $M$ is also a $10$-wall'. 
%We will now observe that a $10$-wall' is too large to fit into the Heinlein Wall $W$.  

\begin{lemma}\label{nofitlem}
Not all branch vertices of any $10$-wall' can be contained in $W$.
\end{lemma}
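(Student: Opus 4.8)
The plan is a treewidth comparison: $W$ has bounded treewidth, a $10$-wall' does not, and if all branch vertices of a $10$-wall' $M'$ lay in $V(W)$ then all but boundedly many of its subdivided edges would have to lie inside $W$ as well.

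First I would record two easy structural facts. \textbf{(a)} In $Z$, every edge with both endvertices in $V(W)$ is an edge of $W$, and the only vertices of $W$ that are incident in $Z$ with an edge leaving $V(W)$ are the terminals $a^*,b^*,c^*,d^*$; this is immediate, since $W$ and the edge‑multiplied copy of $H-\{e_1,e_2\}$ meet exactly in the terminals, and the subdivision vertices (``midpoints'') created when multiplying edges are not in $V(W)$. \textbf{(b)} $W$ has treewidth at most $5$: traversing the bottleneck path $z_0z_1\cdots z_{2r}$ from left to right yields a path‑decomposition whose bags consist of $\{a^*,b^*,z_{j-1},z_j\}$ together with at most two consecutive vertices of the path $P^j$ currently being traversed (plus, between consecutive blocks, a bag $\{a^*,b^*,z_{j-1},z_j,z_{j+1}\}$). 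In addition I will use that a $10$-wall' is $2$-connected and has treewidth at least $11$, and that deleting an edge lowers treewidth by at most one.

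Now suppose, for contradiction, that some $10$-wall' $M'\subseteq Z$ has all its branch vertices in $V(W)$. Regard $M'$ as a subdivision of the simple $2$-connected subcubic $10$-wall' $F$; for $e\in E(F)$ write $P_e$ for the corresponding subdivided path of $M'$, whose endvertices, being branch vertices of $M'$, lie in $V(W)$. Call $e$ \emph{escaping} if $P_e$ has a vertex outside $V(W)$. Each component of $M'-V(W)$ is a path all of whose vertices are subdivision vertices of $M'$ (the branch vertices are in $V(W)$), so by~\textbf{(a)} its two ends attach in $M'$ to terminals, and by $2$-connectedness of $M'$ to two \emph{distinct} terminals. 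A component attaching at a terminal $t$ leaves $t$ through a midpoint, whose only other neighbour is $w^*$, where $w$ is a neighbour of the $H$-vertex $v_t$ in $H-\{e_1,e_2\}$. By the choice of $e_1,e_2$ together with planarity of $H$ (``no shortcuts'' applied to the $70$-apartness of~\eqref{e1e2distance}) no two of $a,b,c,d$ are adjacent in $H-\{e_1,e_2\}$, so $w\notin\{a,b,c,d\}$, hence $w^*\notin V(W)$ and the component contains $w^*$; consequently two components attaching at $t$ cannot use the same neighbour $w$ (they would share $w^*$), and so the number of components attaching at $t$ is at most $|N_{H-\{e_1,e_2\}}(v_t)|\le 2$, the last inequality because $H$ is subcubic and $v_t$ is incident with $e_1$ or $e_2$. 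Summing over the four terminals, there are at most $\tfrac12\cdot4\cdot2=4$ such components, hence at most $q:=4$ escaping edges. Letting $F':=F$ minus its escaping edges, the graph $N:=\bigcup_{e\in E(F')}P_e$ is a subdivision of $F'$, all of whose vertices lie in $V(W)$ and, by~\textbf{(a)}, all of whose edges are edges of $W$; thus $N\subseteq W$ and $\operatorname{tw}(F')=\operatorname{tw}(N)\le\operatorname{tw}(W)\le 5$. On the other hand $\operatorname{tw}(F')\ge\operatorname{tw}(F)-q\ge 11-4=7$, a contradiction.

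The step I expect to be the main obstacle is bounding the number of escaping subdivided edges. The naive count --- each of the four terminals has $M'$-degree at most $3$, hence at most $\tfrac12\cdot4\cdot3=6$ escaping edges --- is just barely too weak against $\operatorname{tw}(\text{$10$-wall}')=11$ and $\operatorname{tw}(W)\le 5$, so one genuinely has to use subcubicity of $H$ and the fact that $e_1,e_2$ were chosen incident with far‑apart branch vertices (so that $H-\{e_1,e_2\}$ contains no edge among $a,b,c,d$) to push the bound down to $q\le 4$; the real content of the lemma lives in this counting and in the (easy but perhaps surprising) observation~\textbf{(b)} that the ``wall'' $W$ is actually treewidth‑bounded. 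The remaining ingredients --- fact~\textbf{(a)}, the spine path‑decomposition, $2$-connectedness and treewidth of walls, and the treewidth drop under edge deletion --- are routine, and the final inequality only needs $\operatorname{tw}(\text{$10$-wall}')>\operatorname{tw}(W)+q$, leaving a little room to spare.
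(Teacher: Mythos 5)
Your proof is correct and, more importantly, it fills a real gap in the paper's own argument. The paper's proof of this lemma is a two-liner: the Heinlein wall $W$ has pathwidth (hence treewidth) at most $5$, a $10$-wall' has treewidth at least $10$, therefore $W$ ``cannot contain'' a $10$-wall'. But the lemma is a statement about \emph{branch vertices}: a priori a $10$-wall' $M'\subseteq Z$ could have all of its branch vertices inside $V(W)$ while several subdivided edges escape through the terminals and re-enter, so that $M'\not\subseteq W$ and the treewidth comparison does not directly apply. You identify exactly this issue and close it: using that each edge of $Z$ between $V(W)$ and the rest of the graph is incident with a terminal and passes through a ``midpoint'' whose other neighbour is some $w^*$ with $w$ adjacent (in $H-\{e_1,e_2\}$) to the $H$-vertex behind the terminal, and using that $a,b,c,d$ are pairwise non-adjacent in $H-\{e_1,e_2\}$ (simplicity of $H$ for the pairs $\{a,b\},\{c,d\}$; \eqref{e1e2distance} plus planarity for the cross pairs), you bound the number of components of $M'-V(W)$ attaching at each terminal by two and hence the number of escaping subdivided edges by four; since edge deletion decreases treewidth by at most one and subdivision preserves it, what remains is a subgraph of $W$ of treewidth at least $6>5$, a contradiction. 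This is genuinely different in content from the paper's proof, which does not visibly use subcubicity of $H$ nor the far-apartness of $e_1,e_2$; your argument shows both are actually needed to make the treewidth comparison go through. Two cosmetic points: the bottleneck path is $z_0\cdots z_r$, not $z_0\cdots z_{2r}$, and you may as well use the paper's bound $\operatorname{tw}(\text{$10$-wall'})\ge 10$ rather than $11$, since $10-4=6>5$ already closes the argument. Also, the appeal to $2$-connectedness of $M'$ to get distinct attachment terminals is unnecessary (the two neighbours of a component in $V(W)$ are distinct positions on a simple path, hence distinct vertices), but it is harmless.
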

\begin{proof}
It is easy to check that a Heinlein Wall has pathwidth at most~$5$, and thus also treewidth at most $5$. Therefore, it cannot contain a $10$-wall' since the latter has treewidth at least~$10$.
\end{proof}

As we are only ever interested in branch vertices of degree~$3$, we will call those \emph{proper branch vertices}. 
Moreover, a \emph{proper branch vertex} of $M^*$ is the image under the $*$-map of a proper branch vertex of $M$.
%\mymargin{$M^*$ is not def'd. What should that be?}
Note that every proper branch vertex of every $10$-wall' $M'$ is in $B$: Indeed, every proper branch vertex in $M'$ is connected to its three adjacent proper branch vertices of $M'$, and those paths form the desired $3$-fan. 
In particular, this implies that every proper branch vertex of $M$ is in $B$. Recall that by choice of $e_1$ and $e_2$, this includes $a$ and $c$. For $b$ and $d$, we do not know, but the following lemma helps to deal with them.

\begin{lemma}\label{treew:lem:bd3fan}
Let $h^* \in V(Z - W)$ and let $T\subseteq Z$ be a $3$-fan from $h^*$ to 
the union of the proper branch vertices of $M^*$ with $\{b^*,d^*\}$. 
Then there is also a $3$-fan from $h$ to proper branch vertices of $M$ in $H$.
\end{lemma}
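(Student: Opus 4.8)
The goal is to transfer a $3$-fan $T$ that lives in $Z$ back to a $3$-fan in $H$. The starting point is that $Z$ is built from $H - \{e_1,e_2\}$ by (i) replacing each edge with $2r$ parallel length-$2$ paths and (ii) gluing in the Heinlein wall $W$ at the terminals $a^*,b^*,c^*,d^*$. There is a natural ``projection'' map $\pi$ from $V(Z) \setminus V(W^0)$ onto $V(H)$: a starred vertex $h^*$ maps to $h$, and a subdivision vertex lying on one of the parallel paths between $g^*$ and $h^*$ maps (say) to $g$ or $h$, collapsing each parallel bundle back to a single edge $gh$ of $H - \{e_1,e_2\}$. Any path in $Z$ that avoids $W^0$ projects to a walk in $H - \{e_1,e_2\}$, hence contains an honest path in $H$ between the projections of its endpoints. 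So the crux is to argue that we may take the three paths of $T$ to avoid $W^0$, and to handle the endpoints $b^*, d^*$, which are not themselves proper branch vertices of $M$.

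First I would deal with the possibility that a path of $T$ enters $W^0$. Since $h^* \notin V(W)$ and the paths of $T$ end at proper branch vertices of $M^*$ or at $b^*$ or $d^*$, a subpath of $T$ that meets $W^0$ must enter and leave $W$ through the only gates available, namely the terminals $a^*,b^*,c^*,d^*$ (these are exactly $V(W) \cap V(Z - W^0)$, by construction, since $M^*$ is disjoint from $W^0$ and the parallel bundles attach only at starred vertices). If a path of $T$ runs $\dots \to t_1 \to (\text{inside } W) \to t_2 \to \dots$ with $t_1,t_2$ terminals, I would like to reroute it outside $W$. Observe that $a^*,c^*$ are proper branch vertices of $M^*$ and $b^*,d^*$ are endpoints of $e_1,e_2$; in $H - \{e_1,e_2\}$ the four vertices $a,b,c,d$ are connected (indeed $a,c$ lie in the wall $M$ and $b,d$ are at bounded distance from it via the fan-defining structure of $B$, but more simply $M - \{e_1,e_2\}$ is still connected and $b,d$ are branch vertices or lie on it), so there are paths in $Z - W^0$ joining any two of $\{a^*,b^*,c^*,d^*\}$. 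The subtlety is disjointness: rerouting one path of the fan must not destroy internal disjointness of the three paths. Here I would use the $2r$-fold multiplicity of the parallel bundles — with $r$ large there is enough room to reroute all three paths of $T$ simultaneously through fresh parallel copies — together with the fact that a $3$-fan has only three paths, so at most three ``detours through $W$'' need to be eliminated. This rerouting step is, I expect, the main obstacle: one must be careful that a path entering and leaving $W$ could do so at the same terminal or at two different ones, and that after rerouting the modified paths still end where they should and remain internally disjoint; the bound $|V(W)| \le O(r^2)$ versus the $2r$ parallel copies needs to be checked to be comfortable, and one may need to invoke that $T$ has only $3$ paths to keep the bookkeeping finite.

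Once $T$ is replaced by a $3$-fan $T'$ from $h^*$ to $(\text{proper branch vertices of }M^*) \cup \{b^*,d^*\}$ with $T'$ disjoint from $W^0$, I apply the projection $\pi$. Each path of $T'$ becomes a walk in $H - \{e_1,e_2\} \subseteq H$; extracting a path from each walk and then greedily pruning to restore disjointness (the three walks already share only $h^*$ at the start, and $\pi$ does not create new coincidences except possibly identifying the two endpoints of a single parallel bundle, which only shortens walks) yields three paths in $H$ from $h$ that are disjoint except at $h$. Their endpoints are proper branch vertices of $M$, except that a path ending at $b^*$ now ends at $b$, and one ending at $d^*$ ends at $d$. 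To finish, I extend such a path: $b$ is an endpoint of $e_1$ whose other endpoint $a$ is a proper branch vertex of $M$, so appending a short path from $b$ to a proper branch vertex of $M$ inside $M$ (recall $b$ lies on $M$, being an endpoint of an edge $e_1$ of $M$) converts the endpoint to a proper branch vertex; likewise for $d$ via $e_2$ and $c$. Since $e_1,e_2$ were chosen $70$-apart in $M$ and $h^*$-fan paths are otherwise confined, these short extensions can be taken disjoint from the rest of the fan (using again that there are only three paths and plenty of room in the large wall $M$). This produces the required $3$-fan from $h$ to proper branch vertices of $M$ in $H$, completing the proof.
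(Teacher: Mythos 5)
Your proposal follows the same coarse outline as the paper (avoid $W^0$, project to $H$, then fix up endpoints at $b$ and $d$), but the last step contains a genuine gap, and the first step is over-engineered compared to the paper's simpler observation.

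On the first step: you worry about rerouting paths of $T$ that enter $W^0$ through the parallel bundles, with bookkeeping about the multiplicity $2r$. This is unnecessary. Since $a^*,c^*$ are themselves proper branch vertices of $M^*$ and the only gates into $W^0$ are the four terminals, one may simply \emph{truncate} each path of $T$ at the first terminal it reaches; that terminal is then an acceptable endpoint (a proper branch vertex of $M^*$ or one of $b^*,d^*$), and since the three paths of $T$ share only $h^*$, the three truncated paths are still disjoint off $h^*$. No rerouting, and no reliance on $r$ being large.

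The real problem is the last step. You propose to finish by appending a short path from $b$ (resp.\ $d$) along $M$ to a proper branch vertex, and assert that this extension ``can be taken disjoint from the rest of the fan'' because there are only three paths and the wall is large. That claim is false in general. If $b$ lies in the interior of a subdivided edge $E$ of $M$, then $b$ has at most two neighbours on $E$ (one of them being $a$) plus possibly one neighbour off $M$, and the only ways out of $b$ towards a proper branch vertex of $M$ go through one of these few vertices. The other two paths of the projected fan $F$ can occupy precisely those vertices (for instance, one through $a$ and one through $b$'s other neighbour on $E$), after which no disjoint extension exists. The size of $M$ and the $70$-apart choice of $e_1,e_2$ do not help, because the obstruction is local to the few edges incident with $b$.

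The paper closes this gap with a Menger argument that does \emph{not} try to extend the given fan $F$. It assumes a $2$-separator $K\subseteq V(H)\setminus\{h\}$ between $h$ and the proper branch vertices of $M$ (otherwise Menger gives the desired $3$-fan immediately), and then derives a contradiction: one path $L_b$ of $F$ ends at $b$ and avoids $K$, which forces $K\subseteq V(E)$ with $a\in K$; this in turn forces $a\in V(F)$, and using subcubicity one exhibits a path from $h$ through $a$'s other neighbour along a different subdivided edge $E'$, reaching a proper branch vertex of $M$ while avoiding $K$ entirely --- contradiction. You need an argument of this kind; a direct extension of $F$ does not suffice.
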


\begin{proof}
To prove the lemma, we need to show two things: First, we need to find a $3$-fan that is disjoint from $W^0$ so we can pull it back to $H$. Second, we need to get rid of $b$ and $d$ and find a $3$-fan that connects $h$ with proper branch vertices of $M$ only.

Since the terminals $a^*$ and $c^*$ of $W$ are proper branch vertices of $M^*$ and since $h^* \not\in V(W)$, we can shorten the $3$-fan $T$ to obtain a $3$-fan that is disjoint from $W^0$ but still connects $h^*$ with proper branch vertices of $M^*$ or $b^*$ or $d^*$ if necessary.
Since this $3$-fan is disjoint from $W^0$, we can find a corresponding $3$-fan $F$ in $H$ that connects $h$ with proper branch vertices of $M$ or $b$ or $d$. %\mymargin{This is the first time we use the pullback argument. Do you think this needs more explanation?}

By Menger's theorem, we may assume that $h$ can be separated in $H$ from the proper branch vertices of $M$
by a set $K\subseteq V(H)\sm\{h\}$ of at most two vertices; otherwise we are done. 
In particular, the $3$-fan $F$ has to contain at least one of $b$ and $d$; let 
us say it contains $b$.
Moreover, the $h$--$b$~path $L_b$ in $F$ cannot meet $K$ as $K$ already has to meet the two other paths in the $3$-fan $F$. 
We are done if $b$ is a proper branch vertex itself. Thus we may assume that there is a unique subdivided edge $E$ of $M$ that contains $b$ 
in its interior. One endvertex of $E$ is $a$. The set $K$ also has to separate $b$ from the endvertices of $E$ (as we can reach $b$ from $h$ via $L_b$
without meeting $K$), which implies $K\subseteq V(E)$, and $a\in K$ as $b$ is a neighbour of $a$.
This implies $a \in V(F)$.
Now consider the $h$--$a$~path $L_a$ in $F$, and observe that $L_a$ is internally disjoint from $K$ as $a\in K$. 
Furthermore, since $b \not\in V(L_a)$, the penultimate vertex of $L_a$ is a neighbour $g\neq b$ of $a$.
Then, as $H$ is subcubic, $g$ lies on a subdivided edge $E'$ of $M$ that is not $E$. By extending $hL_ag$ along $E'$ to the endvertex of $E'$ that is not $a$, we obtain a path from $h$ to a proper branch vertex of $M$ that avoids $E$. Since $K\subseteq V(E)$, that path also avoids $K$, a contradiction.
\end{proof}

The next lemma gives us control over $\Phi$, at least for the set $B$.

\begin{lemma} \label{treew:lem:BonBStar}
$\Phi(B) \subseteq B^* \cup V(W)$. 
\end{lemma}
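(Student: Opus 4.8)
Let $h\in B$, fix a witnessing $10$-wall' $M'$ in $H$ together with a $3$-fan $T$ from $h$ to the proper branch vertices of $M'$, and suppose $\Phi(h)\notin V(W)$; we must show $\Phi(h)\in B^*$. I would first pin down $\Phi(h)$. Being a branch vertex of $\Phi(H)$ it has degree at least $3$ in $Z$, and the only vertices of $Z$ lying outside $V(W^0)$ that have degree at least $3$ are the copy-vertices $v^*$, $v\in V(H)$ (every subdivision vertex of a multiplied edge has degree $2$, and the four terminals $a^*,b^*,c^*,d^*$ are copy-vertices but lie in $V(W)$). Hence $\Phi(h)=k^*$ for some $k\in V(H)\setminus\{a,b,c,d\}$, and the goal becomes $k\in B$. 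Write $N:=\Phi(M')$, a $10$-wall' in $Z$; then $\Phi(T)$ is a $3$-fan in $Z$ from $k^*$ to proper branch vertices $r_1,r_2,r_3$ of $N$, with pre-images $v_1,v_2,v_3\in V(M')$.

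Since $W$ is attached to the rest of $Z$ only at the four terminals, any path leaving $k^*\notin V(W)$ reaches $W^0$ only after passing through a terminal. Truncating each path $P_i$ of $\Phi(T)$ at its first vertex in $V(W)$ (a terminal) if there is one, we obtain internally disjoint paths $P_1',P_2',P_3'\subseteq Z-V(W^0)$ out of $k^*$, each ending either at its $r_i$ or at a terminal. As $a$ and $c$ are proper branch vertices of $M$, all four terminals lie in $S:=(\text{proper branch vertices of }M^*)\cup\{b^*,d^*\}$, so the ``terminal'' endpoints already lie in $S$.

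Now the argument splits. If $N$ is disjoint from $W^0$ and no $P_i'$ was truncated, then $N\subseteq Z-V(W^0)$ and $P_1',P_2',P_3'$ form a $3$-fan from $k^*$ to proper branch vertices of $N$ inside $Z-V(W^0)$. Contracting each multiplied-edge path back to an edge maps $Z-V(W^0)$ onto $H-\{e_1,e_2\}$; it turns $N$ into a $10$-wall' $N_H\subseteq H$ whose proper branch vertices are exactly the pre-images of those of $N$ (in $Z-V(W^0)$ two consecutive branch vertices of $N$ are joined by a path of length exactly $2$, since a subdivision vertex neighbours only copy-vertices), and it turns $P_1',P_2',P_3'$ into three paths out of $k$ that pairwise meet only at $k$: two could share an $H$-vertex only by sharing its copy-vertex, and an $H$-edge only by using distinct copies of one multiplied edge and hence sharing that edge's endpoint copy-vertices — in both situations only $k^*$ is available, and at $k^*$ the three fan-paths depart towards three distinct neighbours of $k$. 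This exhibits a $3$-fan from $k$ to proper branch vertices of the $10$-wall' $N_H$ in $H$, so $k\in B$. In every remaining case I would instead produce a full $3$-fan in $Z$ from $k^*$ to $S$ and invoke Lemma~\ref{treew:lem:bd3fan}, which converts it into a $3$-fan from $k$ to proper branch vertices of $M$ in $H$, again giving $k\in B$ since $M$ is a $10$-wall'. Here the mechanism is that any parts of $N$ or $\Phi(T)$ that stray into $W^0$ do so only through terminals, all of which lie in $S$, while the part of $N$ outside $W$ remains large — $N$ does not embed in $W$ by Lemma~\ref{nofitlem}, and gluing tree-decompositions across the at most four terminals shows $N\cap(Z-V(W))$ still has treewidth at least $6$ — so a Menger argument should show $k^*$ cannot be separated from $S$ by two vertices: after deleting any two vertices, a surviving fan-path keeps a proper branch vertex of the $10$-wall' $N$ in the component $C$ of $k^*$; since $C$ avoids all four terminals it lies inside the planar graph $Z-V(W)$, and there the large wall' $N$ and the huge wall' $M^*$ cannot be pulled apart by so few vertices.

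The main obstacle is exactly this last point: showing that a $10$-wall' in $Z$ linked by a $3$-fan to $k^*$ stays connected to $M^*$ (or to $\{b^*,d^*\}$) after the deletion of any two vertices. This requires knowing which proper branch vertices of $N$ a small separator can cut off — here one uses that $10$-walls have no small balanced separators, so only a bounded neighbourhood of the cut is lost — together with an analysis, inside the planar part $Z-V(W)=$ (multiplied)\ $H-\{e_1,e_2\}$, forcing the surviving chunk to reach $M^*$; this is where the size $10$ in the definition of $B$ and the $70$-apartness of $e_1,e_2$ are needed.
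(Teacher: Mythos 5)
Your setup matches the paper's: you correctly argue that any $\Phi(h)\notin V(W)$ must be a copy-vertex $k^*$, reduce the goal to $k\in B$, handle the easy case where the $3$-fan and the wall' avoid $W^0$, and plan to invoke Lemma~\ref{treew:lem:bd3fan} once a $3$-fan from $k^*$ to $S$ (proper branch vertices of $M^*$ together with $b^*,d^*$) is in hand. Where your proposal breaks down is exactly the point you flag as ``the main obstacle'': you try to rule out a $2$-separator of $k^*$ from $S$, but this claim is not true in general. The $10$-wall' $N=\Phi(M')$ can perfectly well be separable from $M^*$ and from $\{b^*,d^*\}$ by two vertices of $Z$ --- for instance if the witnessing wall' $M'$ sits in a part of $H$ that is joined to the rest of $H$ by a narrow cut. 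Lemma~\ref{nofitlem} only guarantees $N$ does not fit entirely inside $W$; it says nothing about how $N$ attaches to $M^*$. Likewise ``$N\cap(Z-V(W))$ has treewidth $\geq 6$'' does not prevent a $2$-cut isolating $k^*$ from $S$: a subgraph of large treewidth can be far from $S$, and the planarity heuristic (large wall's cannot be pulled apart) only controls separations \emph{within} a wall, not separations between two different walls of the ambient graph.

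The paper's proof does not attempt to exclude the $2$-separator; it embraces it. If Menger fails, a $2$-separator $\{x,y\}$ exists, and because $O=R\cup T$ (the $10$-wall' together with the $3$-fan rooted at $h^*$) is internally well-connected, $\{x,y\}$ can only cut off a single path $P\subseteq O$ all of whose interior vertices have degree $2$ in $O$. Since $O$ must meet $(V(M))^*$ (it contains a terminal or a proper branch vertex of $M^*$), the offending path $P$ contains vertices $p,q\in(V(M))^*$, and one replaces the segment $pPq$ by a $p$--$q$ path $Q$ inside $M^*$. This $Q$ is automatically disjoint from $W^0$ (by construction $M^*\cap W^0=\emptyset$) and from the rest of $O$, yielding a new $10$-wall'-plus-$3$-fan $O'\subseteq\Phi(H)$ that avoids $W^0$ entirely and therefore pulls back to $H$. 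This ``reroute through $M^*$'' step is the missing idea; without it, your Case 2 has no valid argument, and the route you sketch (treewidth gluing plus planar separators) would not repair it because the statement it is meant to prove is false.
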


\begin{proof}
Consider a vertex $z \in \Phi(B) \sm V(W)$. 
First observe that, by definition of $B$, every vertex in $\Phi(B)$ has degree at least~$3$ in $Z$. Thus, for $z$ there is a vertex $h$ of $H$ with $z=h^*$.
We will show that $h\in B$, which then implies $z=h^*\in B^*$.

As $h^*\in\Phi(B)$, there is a vertex $g\in B$ with $h^*=\Phi(g)$. 
Since $g\in B$, there is a $3$-fan in $\Phi(H)$ connecting $h^*$ to the set of proper branch vertices of a $10$-wall' $R\subseteq\Phi (H)$. 
We define $O$ to be the union of this fan and $R$. 
If $O$ is disjoint from the proper branch vertices of $M^*$ and also disjoint from $b^*$ and $d^*$, then it is also disjoint from $W^0$ and we can find a corresponding wall' and fan in $H$, implying that $h \in B$.
%(Note that we used our definition of wall' here which allowed us to not distinguish between subdivided edges of different length.)
(When pulling back from $Z$ to $H$, paths between proper branch vertices of $R$ can become shorter, so that the resulting graph in $H$ may be missing some of the required degree~$2$ vertices to be considered a wall; this is precisely the reason why we make do with walls'.)
Therefore, we conclude that $O$ contains some proper branch vertex of $M^*$ (and thus potentially also a part of $W^0$) or that $O$ contains $b^*$ or $d^*$.

Next, suppose that there is no $2$-separator that separates $h^*$ from all proper branch vertices of $M^*$ and from $b^*$ and $d^*$ in $Z$.
By Menger's theorem, there is thus a $3$-fan from $h^*$ to the proper branch vertices of $M^*$ or $b^*$ or $d^*$. We apply Lemma~\ref{treew:lem:bd3fan} to obtain a $3$-fan in $H$ from $h$ to proper branch vertices of $M$ only, which proves $h \in B$.

We conclude that there is a $2$-separator $\{x,y\}\subseteq V(Z-h^*)$ that separates $h^*$ from all proper branch vertices of $M^*$ and all terminals of $W$.
As every vertex of degree~$3$ in $O$ is connected via three internally disjoint paths to $h^*$, we deduce that there is an $x$--$y$~path $P$ in $O$ that contains all vertices that are separated by $\{x,y\}$ from $h^*$ in $O$ and such that all interior vertices of $P$ have degree~$2$ in $O$.
As $O$ contains a proper branch vertex of $M^*$ or a terminal, the  path $xPy$ must contain a vertex from $(V(M))^*$.
Pick $p,q$ to be the first respectively the last vertex of $(V(M))^*$ on $P$, and choose a $p$--$q$~path $Q$ in $M^*$.
 Note that $Q$ is disjoint from $O - pPq$ since $O \cap M^* \subseteq pPq$.
 Moreover, note that $Q$ is disjoint from $W^0$ as $M^*$ is disjoint from $W^0$.
Replacing $pPq$ by $Q$, we obtain a new graph $O'$ that is the union of a $10$-wall' $R'$ with a $3$-fan from $h^*$ to the branch vertices of $R'$ that is disjoint from $W^0$. We then also find in $H$ a $3$-fan from $h$ to the branch vertices of a $10$-wall', which again leads to $h\in B$.
\end{proof}

With the next two lemmas we show that, with only a few exceptions, a vertex in $B$ is mapped to a vertex in $B^*$ under $\Phi$.

\begin{lemma} \label{treew:lem:XeqY}
$|B^* \sm \Phi(B)| = |\Phi(B) \cap (V(W) \sm B^*)|$
\end{lemma}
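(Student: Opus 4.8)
The claim is an equality of two finite cardinalities, so the natural strategy is to exhibit an injection in each direction, or — more cleanly — to establish that $\Phi$ restricted to $B$ gives a bijection onto a set whose ``deficiency'' inside $B^*$ is exactly compensated by the vertices of $B$ that get mapped into $W$. Concretely, I would argue as follows. Since every vertex in $\Phi(B)$ has degree at least $3$ in $Z$ (immediate from the definition of $B$, as observed in the proof of Lemma~\ref{treew:lem:BonBStar}), and the only degree-$\geq 3$ vertices of $Z$ that are of the form $h^*$ are exactly the images of vertices of $H$, the map $\Phi$ restricted to $B$ is injective: distinct vertices of $H$ have distinct branch-vertex images in $Z$. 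Hence $|\Phi(B)| = |B|$.

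Next I would partition $\Phi(B)$ according to Lemma~\ref{treew:lem:BonBStar}, which tells us $\Phi(B) \subseteq B^* \cup V(W)$. Write $\Phi(B) = (\Phi(B)\cap B^*) \,\dot\cup\, (\Phi(B) \cap (V(W)\sm B^*))$; this is a genuine disjoint union once we note that the part of $\Phi(B)$ lying in $V(W)$ but not in $B^*$ is separated off, while the remainder lies in $B^*$. (One should double-check the small point that $\Phi(B)\cap V(W)\cap B^*$ can be absorbed into the first block without double counting — since $B^*\subseteq V(Z)$ and the decomposition is by membership in $B^*$, this is automatic.) Taking cardinalities:
\[
|B| = |\Phi(B)| = |\Phi(B)\cap B^*| + |\Phi(B)\cap(V(W)\sm B^*)|.
\]
On the other hand, the $*$-map $h\mapsto h^*$ is a bijection from $V(H)$ to $(V(H))^*$, hence from $B$ to $B^*$, so $|B^*| = |B|$. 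Therefore
\[
|B^*\sm\Phi(B)| = |B^*| - |B^*\cap\Phi(B)| = |B| - |\Phi(B)\cap B^*| = |\Phi(B)\cap(V(W)\sm B^*)|,
\]
which is exactly the desired equality.

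The only genuine content, and the step I would be most careful about, is the injectivity of $\Phi$ on $B$ together with the identification $\Phi(B)\cap B^* = B^*\cap\Phi(B)$ as subsets of the same ambient vertex set — i.e. making sure that ``$h^*$'' is being used consistently and that $B^*$ really is a subset of $V(Z)$ meeting $\Phi(B)$ in the expected way. Everything else is bookkeeping with finite sets: the two $\leq 2500$-type estimates play no role here, and no appeal to Lemma~\ref{lem:noTwoLinkages} or the Heinlein-wall structure is needed. So the ``hard part'' is really just verifying that the three cardinalities $|B|$, $|B^*|$, $|\Phi(B)|$ coincide, after which the inclusion-exclusion identity is forced.
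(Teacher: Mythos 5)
Your proof is correct and follows essentially the same route as the paper: partition $\Phi(B)$ via Lemma~\ref{treew:lem:BonBStar}, use $|\Phi(B)| = |B| = |B^*|$, and compare with the partition of $B^*$ into $B^*\cap\Phi(B)$ and $B^*\sm\Phi(B)$. The paper states this more tersely as a single chain of equalities, and the injectivity of $\Phi$ on vertices is immediate from the definition of an embedding, so your degree-$\geq 3$ argument, while not wrong, is unnecessary.
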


\begin{proof}
By Lemma~\ref{treew:lem:BonBStar}, we have
\begin{align*}
|\Phi(B) \cap B^*| + |\Phi(B) \cap (V(W) \sm B^*)| & = |\Phi(B)|  = |B| =|B^*| \\
&= |B^* \cap \Phi(B)| + |B^* \sm \Phi(B)|.
\end{align*}
\end{proof}

%\mymargin{Special treatment for terminals is annoying. Better: don't consider terminals and get an upper bound of $58$ with fewer special cases.}

\begin{lemma} \label{treew:lem:XisSmall}
$|B^* \sm \Phi(B)| \leq 52$.
\end{lemma}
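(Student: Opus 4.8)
The plan is to reduce, via Lemma~\ref{treew:lem:XeqY}, to counting the vertices of $B$ that $\Phi$ maps into the Heinlein wall, and then to play the high connectivity demanded by the definition of $B$ off against the fact that $W$ has treewidth at most~$5$ and touches the rest of $Z$ in only four vertices. For \textbf{Step~1 (reduction)}, note that by Lemma~\ref{treew:lem:XeqY} it suffices to bound $|\Phi(B)\cap(V(W)\sm B^*)|$. The only vertices that $W$ shares with the rest of $Z$ are the four terminals $a^*,b^*,c^*,d^*$, so $B^*\cap V(W)\subseteq\{a^*,b^*,c^*,d^*\}$; since $a,c\in B$, also $a^*,c^*\in B^*$. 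Hence $\Phi(B)\cap(V(W)\sm B^*)\subseteq(\Phi(B)\cap V(W^0))\cup\{b^*,d^*\}$, and as $\Phi$ is injective it is enough to show that at most $50$ vertices $h\in B$ satisfy $\Phi(h)\in W^0$.

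\textbf{Step~2 (each such vertex is $3$-linked to the terminals inside $W$).} Fix $h\in B$ with $v:=\Phi(h)\in W^0$. Pushing the $10$-wall' and the $3$-fan from the definition of $B$ forward along $\Phi$, we obtain inside $\Phi(H)$ a graph $R$ that is a $10$-wall' (up to subdivision) together with a $3$-fan from $v$ to the proper branch vertices of $R$; let $O$ be their union. I claim that $v$ is joined to $\{a^*,b^*,c^*,d^*\}$ by three internally-disjoint paths whose interiors lie in $W^0$. Suppose not. Then by Menger's theorem there is a set $K$ of at most two vertices of $(\Phi(H)\cap W)-v$ separating $v$ from $\{a^*,b^*,c^*,d^*\}$ in $\Phi(H)\cap W$. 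Since $W^0$ meets the rest of $Z$ only in the terminals, any path leaving the component $C\ni v$ of $(\Phi(H)\cap W)-K$ would have to reach a terminal while still inside $W$, contradicting the choice of $K$; so $K$ separates $C$ from $V(\Phi(H))\sm(C\cup K)$ in $\Phi(H)$. As $C\subseteq W^0\subseteq W$ we have $\mathrm{tw}(C)\le\mathrm{tw}(W)\le5$ (Lemma~\ref{nofitlem}), hence $\mathrm{tw}(C\cup K)\le\mathrm{tw}(C)+|K|\le7<10\le\mathrm{tw}(R)$, and therefore $R\not\subseteq C\cup K$. Because $R$ is $3$-connected up to subdivision, $R-K$ is connected and so lies in a single component of $\Phi(H)-K$; not being contained in $C$, it avoids $C$, giving $R\cap C\subseteq K$. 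But then each of the three legs of the $3$-fan runs from $v\in C$ to a branch vertex of $R$ and hence meets $K$, and since the legs are disjoint apart from $v\notin K$ they meet three distinct vertices of $K$ — impossible. This proves the claim, so $v$ lies in the subcubic graph $\Phi(H)\cap W$ and has three internally-disjoint paths to $\{a^*,b^*,c^*,d^*\}$ inside it.

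\textbf{Step~3 (counting).} It remains to bound the number of vertices $v$ of a subcubic subgraph $G'$ of a Heinlein wall $W$ that are joined inside $G'$ to $T=\{a^*,b^*,c^*,d^*\}$ by three internally-disjoint paths; here the explicit structure of $W$ is used. Such a $v$ has degree exactly~$3$ in $G'$, so if $v=u^j_k$ lies on a horizontal path $P^j$, its three incident edges are the two path edges and the single edge to a bottleneck vertex, and the three disjoint paths use exactly these. Tracking where each of them can go — along $P^j$ towards $a^*$ or $b^*$, or to a bottleneck vertex and then along the bottleneck path $z_0z_1\dots z_r$ towards $c^*=z_0$ or $d^*=z_r$ — and using that in the subcubic graph $G'$ each of $a^*,b^*$ is incident with at most three of the edges $u^i_1a^*$, resp.~$u^i_{2r}b^*$, and that each bottleneck vertex carries at most three $G'$-edges (so $P^j$ has at most two of its vertices joined to the bottleneck path in $G'$ whenever the bottleneck path is intact there), one bounds by a small constant both the number of admissible $v$ on each relevant $P^j$ and the number of relevant $P^j$; a symmetric and easier argument treats $v=z_m$. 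Summing the contributions of the finitely many combinatorial types of admissible path-triples yields at most $50$ such vertices, and hence $|B^*\sm\Phi(B)|\le 52$.

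The routine parts are Steps~1 and~2; the real obstacle is Step~3, a somewhat tedious case analysis of how a subcubic subgraph of a Heinlein wall can be $3$-linked to its four terminals. This is precisely where the concrete combinatorics of $W$ — not merely its bounded treewidth — is needed, and where the (presumably far from optimal) constant~$52$ originates.
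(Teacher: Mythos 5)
Your Steps~1 and~2 are sound and, in the case of Step~2, take a genuinely different route from the paper: where you invoke treewidth monotonicity and Menger to show that a $2$-cut cannot separate $v$ from the terminals (since $\mathrm{tw}(C\cup K)\le 7<10\le\mathrm{tw}(R)$), the paper instead applies Lemma~\ref{nofitlem} directly to conclude that some branch vertex of the pushed-forward $10$-wall' must land outside $W$, whence a $3$-fan from $z$ to $Z-W$, which necessarily crosses three terminals. Both arguments ultimately lean on the same facts (a $10$-wall' has treewidth $\ge10$ and is $3$-connected), so neither is clearly stronger; your version is perhaps slightly more self-contained.

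The genuine gap is Step~3, and you yourself flag it as the ``real obstacle.'' You assert that a ``tedious case analysis of how a subcubic subgraph of a Heinlein wall can be $3$-linked to its four terminals'' yields at most $50$ admissible vertices in $W^0$, but this counting is never carried out; the number $50$ is back-solved from the target $52$ rather than derived. Nothing in the proposal establishes why the number of relevant rows $P^j$, or the number of admissible vertices per row, is bounded, and a direct row-by-row case analysis is not obviously finite in the way you suggest (a priori, many rows could each host a candidate $v$). The paper avoids this by a block argument you do not anticipate: it works in $W-\{a^*,b^*\}$ and observes that each $z\in\Phi(B)\cap V(W^0)$, having degree~$3$ and a $3$-fan escaping $W$, is \emph{owned} by exactly one block, and that any owning block must send an edge of $\Phi(H)$ to $a^*$ or $b^*$ (otherwise the three fan-paths would all have to exit through the block's two bottleneck vertices, which is impossible). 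Since $a^*$ and $b^*$ have degree at most~$3$ in $\Phi(H)$, at most six blocks own vertices; and since every non-bottleneck vertex of $\Phi(B)\cap V(W^0)$ must be adjacent in $\Phi(H)$ to a bottleneck vertex of degree $\le3$, each block contributes at most $6+2=8$ vertices, giving $6\cdot8=48$ inside $W^0$ and $48+4=52$ in total. To complete your proposal you would need either to reconstruct this block-counting argument or to supply, explicitly, the case analysis you gesture at; as written, the bound in Step~3 is unsupported.
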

%\mymargin{Argumentation vereinfacht, dafür schlechtere Schranke}
\begin{proof}
By Lemma~\ref{treew:lem:XeqY}, it suffices to show that $|\Phi(B) \cap (V(W) \sm B^*)| \leq 52$.
We show that $|\Phi(B) \cap V(W^0)| \leq 48$, which proves the above claim since $V(W) \sm B^*$ may differ from $V(W^0)$ only in the $4$ terminals of $W$.

Let $z \in \Phi(B) \cap V(W^0)$, and let $h$ be such that $z=\Phi(h)$.
By definition of $B$, $h$ has a $3$-fan to proper branch vertices of a $10$-wall' $M'$ in $H$. 
By Lemma~\ref{nofitlem}, some proper branch vertex of $M'$ needs to be mapped outside $W$ under $\Phi$. 
Then, however, there is a $3$-fan $T_{z}\subseteq\Phi(H)$ from $z$ to a set of vertices in $Z-W$. This $3$-fan must contain at least three terminals of $W$, and thus at least one of $a^*$ and $b^*$. % as an interior vertex. This implies that at least one of $a^*$ and $b^*$ has a degree of at most~$2$ in $\Phi(H)\cap W$.

Since $z \in V(W^0)$, it lies in one or possibly two blocks of $W - \{a^*, b^*\}$. 
We say that a block $O$ of $W - \{a^*, b^*\}$ \emph{owns} a vertex $z\in \Phi(B) \cap V(W^0)$ if $z$ is incident in $\Phi(H)$ with at least two edges of $O$. As each $z\in\Phi(B)\cap V(W^0)$ has degree~$3$, every vertex in $\Phi(B)\cap V(W^0)$ is owned by exactly one block of $W-\{a^*,b^*\}$. 

Now, assume that the block $O$ owns $z$. If $z$ is not a bottleneck vertex, then the three paths in $T_{z}$ cannot all leave $O$ through its two bottleneck vertices: one such path traverses an edge between $O$ and $a^*$ or $b^*$. 
The same happens if $z$ is a bottleneck vertex: then the two paths in $T_{z}$ with an edge in $O$ cannot both leave $O$ through the remaining bottleneck vertex. Therefore, whenever a block $O$ owns a vertex in $\Phi(B)$, there must be an edge between $O$ and $\{a^*,b^*\}$ in $\Phi(H)$. As $a^*,b^*$ both have degree at most~$3$ in $\Phi(H)$, at most six blocks may own vertices in $\Phi(B)$. 

How many vertices in $\Phi(B)$ may be owned by a block $O$ of $W-\{a^*,b^*\}$?
Every $z \in \Phi(B)\cap V(W^0)$ that is not a bottleneck vertex must have a bottleneck vertex as its neighbour in $\Phi(H)$ since $z$ has degree~$3$, see Figure~\ref{treew:fig:condWall}.
As each bottleneck vertex has degree at most~$3$ in $\Phi(H)$, we conclude that each block contains at~most six non-bottleneck vertices of $\Phi(B)$.
Together with the two bottleneck vertices, we obtain $\leq 8$ vertices of $\Phi(B)$ per block. 
As at most six blocks may own vertices in $\Phi(B)$, we obtain at most $48$ vertices in blocks of $W - \{a^*, b^*\}$. Together with the terminals, this yields $|\Phi(B) \cap (V(W) \sm B^*)| \leq 52$.
\end{proof}

Define $B_M$ to be the set of all vertices in $H$ that send a $3$-fan to proper branch vertices of $M$. 
We note that $B_M$ contains all proper branch vertices of $M$, and $B_M \subseteq B$.

\begin{lemma}\label{BMlem}
Let $h^*$ be a vertex in $Z - W$ with a $3$-fan $T\subseteq Z$ to vertices in $B^*_M$. 
Then $h\in B_M$.
\end{lemma}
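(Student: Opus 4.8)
The plan is to mimic the structure of the proof of Lemma~\ref{treew:lem:bd3fan}, of which this is essentially a refinement: there we started from a $3$-fan to proper branch vertices of $M^*$ together with $\{b^*,d^*\}$ and produced a $3$-fan in $H$ to proper branch vertices of $M$; here the target set $B_M^*$ in $Z$ is larger, but every vertex of $B_M$ is, by definition, linked by a $3$-fan in $H$ to proper branch vertices of $M$. So the first step is to reduce to the earlier situation. Since each terminal $a^*,c^*$ of $W$ is a proper branch vertex of $M^*$ (hence in $B_M^*$), and $h^*\notin V(W)$, I would first shorten the fan $T$ so that it avoids $W^0$: cut each of the three paths of $T$ the first time it meets $V(W)$. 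If such a path ever reaches $W$ it must do so through a terminal, and all four terminals lie in $M^* \cup\{b^*,d^*\}$, so the shortened fan still lands in $B_M^* \cup\{b^*,d^*\}$ and is disjoint from $W^0$.

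**Next I would** pull this fan back to $H$. Because the shortened fan is disjoint from $W^0$, it lies in $M^* \cup (\text{copies of }H-\{e_1,e_2\})$, which is a subdivision of a subgraph of $H$; reversing the $*$-map and contracting the artificially inserted degree-$2$ vertices yields a $3$-fan $F$ in $H$ from $h$ to vertices in $B_M\cup\{b,d\}$. (As in the earlier lemmas, paths may shrink under pull-back, but a $3$-fan to a vertex set stays a $3$-fan to that vertex set.) If $F$ already avoids $\{b,d\}$, then $h$ is linked by a $3$-fan to $B_M$, and since every vertex of $B_M$ sends a $3$-fan to proper branch vertices of $M$, routing $F$ onwards through those fans and taking a suitable sub-fan gives a $3$-fan from $h$ to proper branch vertices of $M$, i.e.\ $h\in B_M$. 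Alternatively, and more cleanly, I would argue directly: a $3$-fan from $h$ to $B_M$ certifies membership in $B_M$ because $B_M$ is, in the relevant sense, ``closed'' under sending fans — this is exactly the kind of transitivity-of-$3$-fans argument, so I would state and use it once rather than repeat it.

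**The remaining case** is that $F$ necessarily meets $\{b,d\}$, i.e.\ there is a $2$-separator $K\subseteq V(H)\sm\{h\}$ separating $h$ from $B_M$ in $H$ (otherwise Menger gives a $3$-fan avoiding $\{b,d\}$ and we are in the previous case). Say $F$ uses $b$. The $h$--$b$ path $L_b$ in $F$ misses $K$ (as $K$ must already meet the other two fan-paths). Now I reuse verbatim the endgame of Lemma~\ref{treew:lem:bd3fan}: $b$ lies in the interior of a unique subdivided edge $E$ of $M$ with endvertex $a$, the set $K$ must separate $b$ from the endvertices of $E$, forcing $K\subseteq V(E)$ and $a\in K$; hence $a\in V(F)$, the $h$--$a$ path $L_a$ is internally disjoint from $K$, its penultimate vertex is a neighbour $g\neq b$ of $a$ lying on a subdivided edge $E'\neq E$ (using that $H$ is subcubic), and extending $hL_ag$ along $E'$ reaches a proper branch vertex of $M$ while avoiding $E\supseteq K$ — a contradiction. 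The same argument applies if $F$ uses $d$ instead, with $E$ now the subdivided edge through $d$ and $c$ in place of $a$.

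**The main obstacle** is really just bookkeeping in the first two steps: making sure that the shortening step cannot land a fan-path at $b^*$ or $d^*$ only through a sub-path still disjoint from $W^0$, and that ``fan to $B_M$ implies membership in $B_M$'' is stated precisely enough (including the wall'-vs-wall subtlety about shrinking paths) to be invoked without circularity. Once that transitivity observation is isolated, the contradiction case is essentially a quotation of Lemma~\ref{treew:lem:bd3fan}, so no new idea is needed there.
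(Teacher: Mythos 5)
Your proposal is correct, but it takes a more roundabout route than the paper. The paper's proof works entirely in $Z$ first: assume a $2$-separator $X$ in $Z$ separates $h^*$ from the proper branch vertices of $M^*$; since $|X|\le 2$ and $T$ is a $3$-fan, one fan-path of $T$ reaches some $g^*\in B_M^*$ avoiding $X$, and since $g^*$ itself sends a $3$-fan in $Z$ to proper branch vertices of $M^*$, one of those fan-paths also dodges $X$; the concatenation contradicts the separator. Menger then produces a $3$-fan in $Z$ from $h^*$ to proper branch vertices of $M^*$, and at that point Lemma~\ref{treew:lem:bd3fan} can be applied \emph{as a black box} to conclude $h\in B_M$. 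Your write-up instead pulls the fan back to $H$ first and then re-derives by hand the two things Lemma~\ref{treew:lem:bd3fan} already packages: the ``avoid $W^0$, shorten at terminals, contract degree-$2$ vertices'' pull-back, and the $b/d$-endgame via the subdivided edge $E$ with endvertex $a$. That duplication is exactly what the paper avoids by delaying the pull-back and letting Lemma~\ref{treew:lem:bd3fan} do the bookkeeping.

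Two smaller points worth fixing in your version. First, the ``routing $F$ onwards through those fans and taking a suitable sub-fan'' phrasing is not a valid step as written (the downstream fans can interfere with one another and with $F$); you should use the Menger contradiction you mention as the ``cleaner'' alternative, which is exactly what the paper does in $Z$. Second, your case split is cleaner if stated as ``either no set of at most two vertices separates $h$ from $B_M$, or there is such a $K$'' rather than ``either $F$ avoids $\{b,d\}$ or not'': as you yourself note, the absence of a separator gives a new fan via Menger, which need not be $F$, and the existence of a separator forces $F$ to reach $\{b,d\}$ by a fan-path avoiding $K$. With those adjustments your argument is sound, just longer than necessary because it re-proves Lemma~\ref{treew:lem:bd3fan} along the way instead of invoking it.
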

\begin{proof}
Suppose there is a set $X$ of at most two vertices that separates $h^*$ from all proper branch vertices of $M^*$ in $Z$. 
Because $X$ cannot separate $h^*$ from all three endvertices of $T$, there exists a path $P$ in $Z-X$ between $h^*$ and some vertex $g^*\in B_M^*$.
As there is, by definition, a $3$-fan from $g$ to proper branch vertices of $M$ in $H$, there is also $3$-fan from $g^*$ to proper branch vertices of $M^*$ in $Z$, and then, as $|X|\leq 2$, also a path $Q$ from $g^*$ to a proper branch vertex of $M^*$ in $Z-X$. However, $P\cup Q$ is disjoint from $X$ but contains a path from $h^*$ to a proper branch vertex of $M^*$, which is impossible.

Therefore, by Menger's theorem, there is a $3$-fan $T'$ from $h^*$ to proper branch vertices of $M^*$. By Lemma~\ref{treew:lem:bd3fan}, we obtain $h\in B_M$.
\end{proof}

In conjunction with Lemma~\ref{treew:lem:XisSmall}, the next lemma will be used to repair $M$, that is to prove that $\Phi(H)$ contains most proper branch vertices of $M^*$ and sufficient subdivided edges in between them.

\begin{lemma} \label{treew:lem:pathsBetweenBranchverticesInMStar} %repair lemma
Let $g,h \in B_M$, and let $L$ be a $g$--$h$~path in $H - e_1 - e_2$. Let $P$ be a $g^*$--$h^*$~path in $Z$ such that 
$V(P) \cap (V(H))^* = (V(L))^*$ and such that $P$ is disjoint from $B^*\sm \Phi(B)$.
For every vertex $i^* \in V(P)$ that is a terminal, we furthermore require that $i^*$ has degree~$2$ in $\Phi(H) - W^0$. 
Let $\mathcal S$ be the set of all $B^*_M$-paths in $Z$ that 
are disjoint from the interior of  $P$ and that have at most one endvertex with $P$ in common.
Then there is a $g^*$--$h^*$~path $Q$ in $\Phi(H) - W^0$ that is internally disjoint from every path in $\mathcal{S}$.
 %Note that h^* can be in B^* \sm  \Phi(B), improving the bound
\end{lemma}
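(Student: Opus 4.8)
The plan is to argue by contradiction via Menger's theorem, after first gaining control over $g^*$ and $h^*$ and pushing $P$ out of $W^0$. Since $g,h\in B_M$ we have $g^*,h^*\in B_M^*\subseteq B^*$, and since $P$ (hence each endvertex of $P$) avoids $B^*\sm\Phi(B)$, it follows that $g^*,h^*\in\Phi(B)$; in particular $g^*$ and $h^*$ are branch vertices of $\Phi(H)$ of degree~$3$ and, being $70$-apart from $e_1,e_2$ in $M$, they lie outside $W$. Next I would reduce to the case that $P$ is disjoint from $W^0$: a maximal subpath of $P$ running inside $W$ joins two terminals, and since $\Phi(H)\cap W$ is attached to the rest of $\Phi(H)$ only at terminals, and since a terminal on $P$ has degree~$2$ in $\Phi(H)-W^0$ by hypothesis, such a subpath can be replaced by a detour through $\Phi(H)-W^0$ that preserves both ``$V(P)\cap(V(H))^*=(V(L))^*$'' and ``$V(P)$ avoids $B^*\sm\Phi(B)$''. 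Observe that $Z-W^0$ is exactly the blow-up $Z_0$ of $H-\{e_1,e_2\}$ in which each edge is replaced by $2r$ internally disjoint paths of length~$2$, so after this reduction $P\subseteq Z_0$.

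Now set $Y=\bigcup_{S\in\mathcal S}V(S)$. By the definition of $\mathcal S$, every $S$ is disjoint from $\intr(P)$ and meets $V(P)$ in at most one of $g^*,h^*$; hence $\intr(P)\cap Y=\es$, so $P$ is a $g^*$--$h^*$ path lying in $Z_0$ whose interior avoids $Y$. In other words, $P$ together with the defining properties of $\mathcal S$ pins down a ``corridor'' in $Z_0$ — mirroring the path $L$ in $H-e_1-e_2$ — along which $g^*$ and $h^*$ can be joined off the obstacle set $Y$; the task is to transfer this connection into $\Phi(H)-W^0$. Suppose no $g^*$--$h^*$ path $Q$ in $\Phi(H)-W^0$ has $\intr(Q)\cap Y=\es$. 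Then $Y\sm\{g^*,h^*\}$ separates $g^*$ from $h^*$ in $\Phi(H)-W^0$; fix a minimal separating subset $Y'$, so that every vertex of $Y'$ lies on some path of $\mathcal S$ and $Y'\cap V(P)=\es$.

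The contradiction would be obtained by transferring $Y'$ back to $H$. Deleting $Y'$ together with $\Phi(H)\cap W^0$ from $\Phi(H)$ corresponds, via $\Phi$, to deleting from $H$ a small set $R$ (the $\Phi$-preimages of the vertices of $Y'$, each costing at most one vertex or edge of $H$) together with the ``$W$-blob'' $\Phi^{-1}(\Phi(H)\cap W^0)$, which is attached to the rest of $H$ through at most the four terminals. Here the hypotheses enter twice. On one hand, $g\in B_M$ says $g^*$ sends a $3$-fan inside $Z_0$ to proper branch vertices of $M^*$; together with the corresponding $3$-fan at $h^*$, with $M^*$, and with the corridor path $P\subseteq Z_0$ (which misses $Y'$), this shows $g^*$ and $h^*$ are tightly connected to $M^*$ inside $Z_0$. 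On the other hand, $g^*\in\Phi(B)$ furnishes a $10$-wall' $R_g\subseteq\Phi(H)$ with a $3$-fan from $g^*$ to its proper branch vertices, and by Lemma~\ref{nofitlem} together with a treewidth estimate, $R_g$ — even after deleting a bounded vertex set — cannot be absorbed into the $W$-blob, so inside $\Phi(H)-W^0$ the vertex $g^*$ stays connected to a large part of $\Phi(H)$ outside $W$, and likewise for $h^*$. Feeding these facts, via Lemmas~\ref{treew:lem:bd3fan} and~\ref{BMlem} (to transport fans from $Z$ back to $H$), into a count of how many deletions are ``effective'' shows that the situation forces a $2$-separator in $H$ between $g$ and the proper branch vertices of $M$, contradicting $g\in B_M$.

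The crux — and the main obstacle — is this last transfer: relating the separator $Y'$, which lives in $Z$ and may land in the interior of a subdivided edge of $\Phi(H)$ rather than on a branch vertex, to an honest small separator in $H$, while simultaneously accounting for the part of $\Phi(H)$ that dips into $W$. The two technical hypotheses on $P$ — avoiding $B^*\sm\Phi(B)$, and degree~$2$ in $\Phi(H)-W^0$ at terminals — together with Lemma~\ref{nofitlem}, are precisely what keep the $W$-blob and $Y'$ from cooperating to cut $g^*$ off from $M^*$ with too few vertices; turning that into the exact bookkeeping is where the real work lies.
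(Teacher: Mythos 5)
Your proposal does not go through, and the gap is exactly at the place you flag as ``the crux.'' You propose to argue by contradiction: if no $g^*$--$h^*$~path $Q$ in $\Phi(H)-W^0$ avoids $Y=\bigcup_{S\in\mathcal S}V(S)$ internally, take a minimal separator $Y'\subseteq Y\sm\{g^*,h^*\}$ in $\Phi(H)-W^0$ and ``transfer it back to $H$.'' But $\mathcal S$ is the set of \emph{all} $B_M^*$-paths in $Z$ that miss $\intr(P)$ and share at most one endvertex with $P$, so $Y$ can contain essentially every vertex of $Z$ outside $P$; there is no a priori bound on $|Y'|$ and, more importantly, the vertices of $Y'$ are merely \emph{interior} vertices of some $B_M^*$-paths and need not correspond to branch vertices of $\Phi(H)$ at all. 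The asserted conclusion, that ``the situation forces a $2$-separator in $H$ between $g$ and the proper branch vertices of $M$,'' is exactly what would need to be proved; nothing in the sketch explains why a separator of size $\le 2$ appears or why the deleted vertices project to a bounded set in $H$. In a subcubic graph $g^*$ has only three neighbours in $\Phi(H)-W^0$, so $Y'$ can trivially have size $3$ and consist of those three neighbours, without any contradiction to $g\in B_M$ falling out automatically.

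You also miss the two structural ideas that make the paper's proof work. First, the paper does induction on the number of internal vertices of $P$ lying in $B_M^*$; this cleanly reduces to the case where $P$ has no such internal vertex, which is what makes the later ``first/last vertex on $P$'' arguments bite (they force $k^*=h^*$). Second, the path $Q$ is not pulled out of thin air via Menger: it is built inside the explicit $2$-connected subgraph $O\subseteq\Phi(H)$ formed by a $10$-wall' $R$ together with the $3$-fan from $g^*$ to $R$ (which exists because $g^*\in\Phi(B)$). The $2$-connectivity of $O$ and the careful choice of $k^*$ and $\ell^*$ on $P$, combined with Lemma~\ref{BMlem}, is what turns ``$Q$ hits $W^0$ or a path in $\mathcal S$'' into a $2$-separator contradiction. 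Your preliminary reduction (pushing $P$ out of $W^0$ by replacing $W$-detours with one of the $2r$ parallel length-$2$ paths) is legitimate but is not needed and does not bring the argument closer to closing. Finally, a small inaccuracy: from $g,h\in B_M$ it does \emph{not} follow that $g^*,h^*$ lie outside $W$; $g$ could be $a$ or $c$ (these are proper branch vertices of $M$, hence in $B_M$), which is precisely why the lemma carries the extra hypothesis about terminals on $P$ having degree~$2$ in $\Phi(H)-W^0$.
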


\begin{proof}
We do induction on the number $n$ of internal vertices of $P$ that lie in $B^*_M$. Because it is shorter, we start with the induction step. Thus, assume that $n>0$, ie that $P$ contains an internal vertex $k^*\in B^*_M$.
We split the path $P$ into $P_1=g^*Pk^*$ and $P_2=k^*Ph^*$, and observe that both paths have fewer than $n$ internal vertices in $B^*_M$. As subpaths of $P$, the paths $P_1$ and $P_2$ still satisfy the conditions of the lemma.
Now induction yields a $g^*$--$k^*$~path $Q_1\subseteq\Phi(H)-W^0$ and a $k^*$--$h^*$~path $Q_2\subseteq \Phi(H)-W^0$.
Let $Q$ be a $g^*$--$h^*$~path contained in $Q_1\cup Q_2\subseteq\Phi(H)-W^0$.  

Consider a path $S\in\mathcal S$, and suppose that $S$ meets $Q$ in an internal vertex of $Q$.
We first note that $S$ cannot contain $k^*$ as any path in $\mathcal S$ is disjoint from the interior of $P$. Thus, $S$ meets an internal vertex of $Q_1$ or of $Q_2$, say of $Q_1$. This, however, is impossible as $S$ is disjoint from the interior of $P_1$, and may have at most one endvertex with $P_1$ in common. Therefore, $Q$ is as desired, and we have proved the induction step. 

It remains to establish the induction start. Then, $n=0$, which implies that:
\begin{equation}\label{nointernal}
\emtext{No internal vertex of $P$ lies in $B^*_M$.}
\end{equation}

As $P$ is disjoint from $B^* \sm \Phi(B)$, we get $g^* \in \Phi(B)$. 
Thus, there is a $10$-wall' $R$ and a $3$-fan from $g^*$ to proper branch vertices of $R$ in $\Phi(H)$. 
We denote by $O$ the union of $R$ and this $3$-fan. Note that $O$ is a subgraph of $\Phi(H)$.

Let us prove that:
\begin{equation}\label{nbs}
\text{\em 
For any neighbour $g_0$ of $g$ in $H-e_1-e_2$, the vertex $g_0^*$ lies in $O$.
}
\end{equation}
Indeed, since $H$ is subcubic and since $g^*$ has degree~$3$ in $O$ it follows that for  every neighbour $g_0$ of $g$ in $H$, we have $g^*_0\in V(O)$ --- unless $g^*$ is a terminal. Then, since $g \in B_M$, $g$ has degree~$2$ in $H-e_1-e_2$, and by assumption, $g^*$ has degree~$2$ in $\Phi(H)-W^0$: again, for every neighbour $g_0$ of $g$ in $H-e_1-e_2$, the vertex $g_0^*$ lies in $O$.

%ToDo: Re-read more carefully

Let $g_1$ be the neighbour of $g$ in $H-e_1-e_2$ that lies in $L$, the $g$--$h$~path in $H$. 
It now follows from~\eqref{nbs} that:
\begin{equation}\label{v1v2v3}
\begin{minipage}[c]{0.8\textwidth}\em
For the neighbour $g_1$ of $g$ in $L$ it holds that $g_1^*\in V(O\cap P)$.
\end{minipage}\ignorespacesafterend 
\end{equation} 

Among all vertices in $O\cap P$, pick $k^*$ to be closest to $h^*$ on $P$.
Note that since $g_1^* \in V(P)$, it is a candidate for $k^*$. Thus we immediately have $k^*\neq g^*$.

Next, we claim:
\begin{equation}\label{v1v2v3B}
k^*\in B^*_M
\end{equation} 
Suppose not. In particular, $k^*\neq h^*$ as $h^*\in B^*_M$.
By Lemma~\ref{BMlem}, there are two vertices $x_1,x_2\neq k^*$ that
separate $k^*$ from $B^*_M$. As $k^*Ph^*$ is a $k^*$--$B^*_M$~path, one
of $x_1,x_2$ lies in  $k^*Ph^*$, say $x_1$. By choice of $k^*$, the subpath $k^*Ph^*$ meets $O$ only in $k^*$, which implies that $x_1\not\in V(O)$.
In $O$ there are two internally disjoint $k^*$--$g^*$~paths. Since $x_1\not\in V(O)$, one of the 
two internally disjoint $k^*$--$g^*$~paths in $O$ is disjoint from $x_1,x_2$ unless $x_2 = g^*$. We thus conclude $x_2 = g^*$. 

Next, as $g^*\in B^*_M$ by assumption, it follows that there exists a $3$-fan $T$ from $g^*$ to $B^*_M$ in $Z$. %\mymargin{does this need proof?} %- should be fine, just take $3$-fan from $H$ and find a counterpart in $Z$
Since $H$ is subcubic, for two neighbours $g_1, g_2$ of $g$ in $H - e_1 -e_2$, $g_1^*$ and $g_2^*$ lie on different paths of the $3$-fan $T$ from $g^*$ to $B^*_M$ in $Z$. That is, there are disjoint paths $P_1,P_2$, where $P_1$ is a $g_1^*$--$B_M^*$~path and $P_2$ is a  $g_2^*$--$B_M^*$~path, both disjoint from $x_2 = g^*$. As $O$ is $2$-connected, there are paths in $O$ from $k^*$ to $g_1^*$ and $g_2^*$ that avoid $\{x_1, x_2\}$ (recall that $x_1 \not\in V(O)$). Since $P_1$ and $P_2$ are disjoint, at least one of them is disjoint from $x_1$. Thus there is a $k^*$--$B_M^*$~path in $Z - \{x_1, x_2\}$, a contradiction. This proves~\eqref{v1v2v3B}.

With~\eqref{nointernal} we get that $k^*=h^*$, which implies $h^*\in V(O)$. 
We claim that:
\begin{equation}\label{progress}
\emtext{
There is a $g^*$--$h^*$~path $Q$ in $O$ whose second vertex in $(V(H))^*$ is $g_1^*$.
}
\end{equation}

Since $O$ is $2$-connected and $g_1^* \in V(O)$ by \eqref{v1v2v3}, there is a $g_1^*$--$h^*$~path $Q'$ in $O$ that is disjoint from $g^*$. Since $g_1$ is a neighbour of $g$ in $H - e_1 - e_2$, there is a $g^*$--$g_1^*$~path $Q''$ of length~$2$ in $O$, which by construction of $Z$ is internally disjoint from $Q'$. Combining those to $Q = Q'' \cup Q'$ thus yields the desired $g^*$--$h^*$~path $Q$ in $O$. This proves \eqref{progress}.

Note that $Q\subseteq\Phi(H)$. Thus, to finish the proof we need to show that $Q$ is disjoint from $W^0$; and that $Q$ is internally disjoint from every $S\in\mathcal S$.

Suppose  
that the interior of $Q$ meets either $W^0$ or some path in $\mathcal S$, and let $q$ be the first vertex in the interior of $Q$ where that happens. 
Next, among all vertices in $g^*Qq\cap P$, pick $\ell^*$ to be the one closest to $h^*$ on $P$. 

\begin{figure}[htb]
\centering
\begin{tikzpicture}
\tikzstyle{bvx}=[thick,circle,inner sep=0.cm, minimum size=1.5mm, fill=white, draw=black,fill=black]
\tikzstyle{vx}=[thick,circle,inner sep=0.cm, minimum size=1.5mm, fill=white, draw=black]
\tikzstyle{Qpath}=[ultra thick, dunkelgrau]

\node[bvx,label=left:$g^*$] (v) at (0,0){};
\node[bvx,label=right:$h^*$] (w) at (4.7,0){};

\draw[hedge] (v) to (w);

\node at (4,-0.3) {$P$};

\draw[Qpath,rounded corners=5pt] (v) to [rounded corners=0pt] (0.7,0) to  (0.7,-0.3) to
(1.3,-0.8) to (2,-0.9) to node[auto,swap,text=black]{$Q$} (2.4,-0.5) to (3,-0.2) to (3,0);

\draw[Qpath] (3,0) to (2.7,0) arc [start angle=0, end angle=180, radius=0.5]
to ++(-0.4,0)
arc [start angle=180, end angle=90+30, radius=1.3] node[vx,label={[text=black]above right:$q$}] (q) {};

\node[vx,label=above right:$\ell^*$] (u) at (3,0){};

\node[bvx] (s1) at (0.4,1.3){};
\node[bvx] (s2) at (4.2,1.4) {};

\draw[hedge] (s1) to [out=0,in=160] node[midway,auto] {$S$} (q);
\draw[hedge] (q) to [out=-20,in=200] (s2);

\end{tikzpicture}
\caption{Situation in Lemma~\ref{treew:lem:pathsBetweenBranchverticesInMStar}. Vertices in $B^*_M$ in black.}\label{repairfig}
\end{figure}
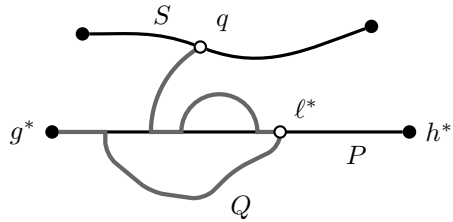

We observe that $\ell^*$ must be an internal vertex of $P$. 
Indeed, $\ell^*\neq h^*$ as $q$ is an internal vertex of $Q$, 
and $\ell^*\neq g^*$ by~\eqref{progress}.
From~\eqref{nointernal} it follows that $\ell^*\notin B^*_M$, and from 
 Lemma~\ref{BMlem} it follows that there is a set $Y=\{y_1,y_2\}$ of at most two vertices 
that separates $\ell^*$ from $B^*_M$ in $Z$.

As the paths $g^*Q\ell^*$ and $\ell^*Ph^*$ meet only in $\ell^*$ by choice of $\ell^*$, it follows that one vertex in $Y$, $y_1$ say, lies in $\ell^*Ph^*$ and the other, $y_2$, in $g^*Q\ell^*$. Now, the path $\ell^*Qq$ meets $g^*Q\ell^*$ and $\ell^*Ph^*$ also only in $\ell^*$ and thus is disjoint from $Y$. As a consequence, $q$ cannot lie in $W^0$ as every vertex in $W^0$ sends a $3$-fan to $B^*_M$. %\mymargin{is this enough?} %- should be
Therefore, $q$ lies on a path $S\in\mathcal S$.

Note that 
as $\ell^*Qq$ is disjoint from $Y$ and as the endvertices of $S$ lie in $B^*_M$, it follows that 
both vertices in $Y$ must lie on $S$.
If $y_1$ lies in $S$ then, as $y_1\in V(P)$ and as $P$ is internally disjoint from $S$, the vertex $y_1$ must
be an endvertex of $P$, ie, $y_1=h^*$. As $S$ is a $B^*_M$-path, it follows that $y_1$ is an endvertex of $S$.  
That $y_2\in V(g^*Q\ell^*)$ lies in $S$ implies, too, that $y_2$ must be an endvertex of $S$: Indeed, $q$ was the first internal vertex on $Q$ to lie in $S$, and thus $y_2=g^*$, which lies in $B^*_M$. But now, $S$ has both endvertices with $P$ in common, which is not allowed for a path in $\mathcal S$.
We have obtained the final contradiction that proves the lemma.
\end{proof}

We are done if we find an ($a^*$--$b^*$, $c^*$--$d^*$) linkage in $\Phi(H)\cap W$. 
The next lemma tells us that if there is no such linkage then we obtain two different paths between the 
terminals, one inside  the Heinlein wall and one outside.

\begin{lemma} \label{treew:lem:ab-cd-pathInWWithEndsInSameComponent}
Either there is an ($a^*$--$b^*$, $c^*$--$d^*$) linkage in $\Phi(H)\cap W$, or 
there is an $\{a^*, b^*\}$--$\{c^*, d^*\}$~path in $\Phi(H) \cap W$ whose endvertices are in the same component of $\Phi(H) - W^0$. 
\end{lemma}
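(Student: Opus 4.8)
The plan is to analyze the structure of $\Phi(H) \cap W$, viewing it as the part of the $H$-subdivision that lives inside the Heinlein wall. Since $W$ is attached to the rest of $Z$ only through its four terminals $a^*, b^*, c^*, d^*$, every path of $\Phi(H)$ that enters $W^0$ must do so through one of these terminals. So $\Phi(H) \cap W$ decomposes into ``pieces'' hanging off the terminals, plus possibly edges of $\Phi(H)$ lying entirely in $W$ that connect terminals to each other. The key observation I would exploit is that $W^0$ genuinely contains a lot of $\Phi(H)$: indeed, by Lemma~\ref{treew:lem:XisSmall} only a bounded number (at most $52$) of the ``well-connected'' $B^*_M$-vertices can be mapped into $W$, but the Heinlein wall $W$ of size $2r$ is huge, and the only way the subdivision of $H$ can traverse $W^0$ is via paths between the terminals — the subgraph of $\Phi(H)$ in $W^0$ is a disjoint union of paths whose endpoints are terminals (each terminal has degree at most $3$ in $\Phi(H)$, so there are only a few such paths).

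The first step is to record that $\Phi(H) \cap W^0$ is a linear forest (disjoint union of paths) each component of which has both endvertices among $\{a^*,b^*,c^*,d^*\}$ — this follows because the terminals are the only vertices of $W$ with neighbours outside $W^0$, and an $H$-subdivision has maximum degree $3$, so no branch vertex of $\Phi(H)$ can lie in $W^0$ unless... actually branch vertices \emph{can} lie in $W^0$, but then they send a $3$-fan and are in $\Phi(B)\cap V(W^0)$, of which there are boundedly many; in any case, the crucial point is just that every $\Phi(H)$-path through $W^0$ enters and leaves through terminals. Next I would argue that $\Phi(H)$ must use $W$ nontrivially: since $H$ contains a large wall $M$ and $\Phi(H)$ contains the corresponding $M^*$ (up to the boundedly many bad vertices), and since $M^*$ lies outside $W^0$ but $e_1, e_2$ have been deleted and replaced by the wall $W$, the subdivision $\Phi(H)$ has to route the images of $e_1$ and $e_2$ — which were $a$--$b$ and $c$--$d$ edges of $H$ — somehow, and the only ``new'' structure available is $W$. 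More precisely, I would use Lemmas~\ref{nosmallhittingsetlem}-flavoured reasoning together with the degree constraint at the terminals to see that the branch vertices $a^*, b^*, c^*, d^*$ of $H$ (which remain branch vertices in $\Phi(H)$ since $a,c$ are proper branch vertices) are joined up in $\Phi(H)$ in a way that forces at least an $\{a^*,b^*\}$--$\{c^*,d^*\}$ connection using $W$.

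The heart of the argument is then a case analysis on how the pieces of $\Phi(H)\cap W$ link the four terminals. Either (i) some piece inside $W$ joins $a^*$ to $b^*$ and another disjoint piece joins $c^*$ to $d^*$ — that is precisely an ($a^*$--$b^*$, $c^*$--$d^*$) linkage in $\Phi(H)\cap W$, and we are in the first alternative; or (ii) no such linkage exists, in which case the pieces inside $W$ must include an $\{a^*,b^*\}$--$\{c^*,d^*\}$~path $R$ — because $\Phi(H)$, being connected and being forced to traverse $W$ to realize the adjacencies around $e_1,e_2$, must somewhere cross between the $\{a^*,b^*\}$ side and the $\{c^*,d^*\}$ side inside $W$. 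It then remains to check that the two endvertices of $R$ (one in $\{a^*,b^*\}$, one in $\{c^*,d^*\}$) lie in the same component of $\Phi(H)-W^0$: this should follow because outside $W^0$ the graph $\Phi(H)-W^0$ still contains essentially all of $M^*$ (minus boundedly many vertices), which is connected and touches all four terminals' ``outside neighbourhoods'' — or, if $R$'s endvertices were in different components of $\Phi(H)-W^0$, then $\Phi(H)$ would be disconnected unless another piece of $W$ reconnects them, and that second piece, combined with $R$, would produce the linkage of case (i), a contradiction.

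The main obstacle I anticipate is case (ii): ruling out the possibility that the $\{a^*,b^*\}$--$\{c^*,d^*\}$ path exists only ``virtually'' — i.e., that $\Phi(H)$ routes around $W$ entirely, never using $W^0$ at all, in which case there might be no path in $\Phi(H)\cap W$ at all and we would need the linkage to come from somewhere else. Here I would need to invoke that $e_1$ and $e_2$ were deleted from $H$ before building $Z$, so the adjacencies they represented are simply not present in $Z - W^0$; hence $\Phi$ must use $W$ (either $W^0$ or at least the terminal-incident edges of $W$) to realize the subdivided images of $e_1$ and $e_2$, and a careful bookkeeping of which terminal-pieces of $\Phi(H)\cap W$ are forced — using the degree-$3$ bound at each terminal and the fact established in earlier lemmas that $M^*$ survives almost intact in $\Phi(H)$ — pins down enough structure to guarantee either the linkage or the crossing path. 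Getting this bookkeeping clean, and in particular handling the degenerate sub-cases where some terminal coincides with another branch vertex or where $b$ or $d$ is not a proper branch vertex (which is why Lemma~\ref{treew:lem:bd3fan} was proved), is where the real work lies.
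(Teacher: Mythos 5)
Your plan has two genuine gaps, both of which are exactly the points where the paper has to do real work.

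First, the step ``$\Phi(H)$ must use $W$ to realize the images of $e_1$ and $e_2$'' does not hold as stated, because you are implicitly assuming $\Phi$ maps $a,b,c,d$ to $a^*,b^*,c^*,d^*$. Nothing in the setup forces this: $\Phi$ is an arbitrary embedding of $H$ into $Z$, and a priori it could place the branch vertices anywhere, routing the images of $e_1,e_2$ entirely through the multiplied edges of $Z-W^0$ and never touching $W$. The paper's proof gets around this indirectly: it splits into two cases according to whether $\Phi(B)$ meets $W^0$ (or some terminal in $\Phi(B)$ has degree $\geq 2$ in $\Phi(H)\cap W$). In the first case an $\{a^*,b^*\}$--$\{c^*,d^*\}$ path drops out of a $3$-fan through $W$ together with Lemma~\ref{nofitlem}; in the second case the paper first establishes $\Phi(B)\cap (V(W)\sm B^*)=\emptyset$ (a separate argument), from which Lemma~\ref{treew:lem:XeqY} gives $B^*=\Phi(B)$, which then puts $a^*,c^*$ into $\Phi(B)$ and yields the paths $Q_{a^*},Q_{c^*}$ into $W$ via $3$-fans. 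That chain of deductions is what replaces your unjustified ``must use $W$'' claim, and it is not something you can shortcut by an appeal to connectivity of $\Phi(H)$.

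Second, your fallback argument for the ``endvertices in the same component of $\Phi(H)-W^0$'' conclusion --- that if they were in different components, another piece of $W$ would reconnect them and combine with $R$ to give the linkage of case (i) --- is false. Two $\{a^*,b^*\}$--$\{c^*,d^*\}$ paths in $W$ need not be vertex-disjoint: they can share a terminal (e.g.\ both end at $a^*$) or cross inside $W^0$, in which case they do not form a linkage. This is precisely the sub-case the paper handles with Lemma~\ref{treew:lem:pathsBetweenBranchverticesInMStar}: when $Q_{a^*}$ and $Q_{c^*}$ intersect, it extracts from $Q_{a^*}\cup Q_{c^*}$ an $a^*$--$c^*$ path and uses that lemma (with $B^*=\Phi(B)$ to verify the hypothesis) to produce the required path in $\Phi(H)-W^0$. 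Without invoking that lemma --- which is arguably the technical heart of the whole section --- you have no way to conclude that the two relevant terminals are joined outside $W^0$. Your sketch acknowledges that ``the real work lies'' in the bookkeeping, but the bookkeeping is not a routine matter of checking degenerate subcases; it requires the machinery you have not deployed.
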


\begin{proof}
We proceed by case distinction. First, consider the case that there is a $v\in\Phi(B)$ such that $v$
lies in $W^0$ or such that $v$ is a terminal with degree at least~$2$ in $\Phi(H)\cap W$.

As $v\in\Phi(B)$, there is a $3$-fan $T$ in $\Phi(H)$ from $v$ to the proper branch vertices of some $10$-wall' $R\subseteq\Phi(H)$.
Moreover, as $R$ is too large to fit into $W$ by Lemma~\ref{nofitlem}, there must be some proper branch vertex $w$ of $R$ outside $W$. 
Thus, $T\cup R$ contains three internally disjoint $v$--$w$~paths $P_1,P_2,P_3 \subseteq \Phi(H)$. 

By definition of $v$, there are three terminals that are incident with an edge in $(P_1 \cup P_2 \cup P_3) - W^0$. 
Therefore, $P_1\cup P_2\cup P_3$ contains an $\{a^*, b^*\}$--$\{c^*, d^*\}$~path that lies in $\Phi(H)\cap W$. Moreover, the endvertices of that path are connected in $\Phi(H)-W^0$ via $P_1\cup P_2\cup P_3-W^0$.

Second, we consider the case when $\Phi(B)\cap V(W^0)=\emptyset$ and when every terminal in $\Phi(B)$ has degree at most~$1$ in $\Phi(H)\cap W$. We claim that

\begin{equation}\label{noNewBvertex}
\Phi(B)\cap (V(W) \sm B^*)=\emptyset
\end{equation} 

Since $\Phi(B)\cap V(W^0)=\emptyset$ and since $\{a^*,c^*\} \subseteq B^*$, the claim~\eqref{noNewBvertex} can only be violated if $b^*\in\Phi(B)\sm B^*$ or if $d^*\in\Phi(B)\sm B^*$. While $b^*$ and $d^*$ are not exchangeable, they are largely symmetric for the purpose of the proof of~\eqref{noNewBvertex}. Therefore, we only concentrate on $b^*$ and consider the case that $b^*\in\Phi(B)$ and then show that this implies $b^*\in B^*$. The proof for $d^*$ is similar.

From  $b^*\in\Phi(B)$ it follows that there are three paths $P_1, P_2, P_3$ in $\Phi(H)$ from $b^*$ to proper branch vertices of some $10$-wall' $R \subseteq \Phi(H)$ such that $P_1, P_2, P_3$ are disjoint except for $b^*$. 
Note that all proper branch vertices of $R$ lie in $Z - W^0$ as $\Phi(B)$ is disjoint from $W^0$.
Therefore, $R$ may only intersect $W^0$ in at most two paths. (Here, we also use that every terminal in $\Phi(B)$ has degree at most~$1$ in $\Phi(H)\cap W$.) Let $Q_1, Q_2$ be the paths in $R$ between proper branch vertices of $R$ that are incident with $W^0$ (if they exist at all).

Let $P \in \{P_1, P_2, P_3, Q_1, Q_2\}$, and observe that $P\subseteq\Phi(H)$. 
As the endvertices of $P$ are  either proper branch vertices of $R$ or $b^*$, it follows that they lie in $V(H)^*$. We denote them by $g^*$ and $h^*$. Moreover, as we assume $b^*\in\Phi(B)$ it follows that $\Phi(H)$ contains three internally disjoint disjoint $g^*$--$h^*$~paths. Only two of these may intersect $W^0$. As a consequence, the endvertices $g^*$ and $h^*$ are contained in the same component of  $\Phi(H) - W^0$.
Therefore, if $P \cap W$ contains exactly one non-trivial $\{a^*, b^*\}$--$\{c^*, d^*\}$~path $Q$, then, with the help of $P-W^0$, we see that the endvertices of $Q$ are in the same component of $\Phi(H) - W^0$. As, moreover, $P\cap W$ contains a path between the endvertices of $Q$, we have found a path as in the statement of the lemma and are done. 

If, on the other hand, $P \cap W$ contains two non-trivial $\{a^*, b^*\}$--$\{c^*, d^*\}$~paths, we can use $e_1$ and $e_2$ to find a $g$--$h$~path $I_P$ in $H$ with $V(I_P)^*\subseteq V(P)$.
If $P \cap W$ contains an $a^*$--$b^*$~path or a $c^*$--$d^*$~path (or both), we can again use $e_1$ or $e_2$ to find a $g$--$h$~path $I_P$ in $H$ with $V(I_P)^*\subseteq V(P)$.
(If $P \cap W$ contains both an $a^*$--$b^*$~path and a $c^*$--$d^*$~path, we can actually stop as then we have the desired linkage.)
Finally, if $P \cap W$ contains no non-trivial path then, too, we easily find a $g$--$h$~path $I_P$ in $H$ with $V(I_P)^*= V(P)\cap V(H)^*$.

Since $R$ intersects $W^0$ only in $Q_1$ and $Q_2$ (if these exist at all), using $I_{Q_1}$ and $I_{Q_2}$, we find a $10$-wall' $M'$ in $H$ such that $V(M')^*\subseteq V(R)$. 
In the same way, we note that the $b$--$M'$~paths $I_{P_1},I_{P_2},I_{P_3}$ in $H$ satisfy $V(I_{P_j})^*\subseteq V(P_j)$ for $j=1,2,3$. In particular, $I_{P_1},I_{P_2},I_{P_3}$ are pairwise disjoint except for $b$. In total, we have found a $3$-fan from $b$ to a $10$-wall', which implies that $b\in B$ and thus $b^*\in B^*$. This proves \eqref{noNewBvertex}.

By Lemma~\ref{treew:lem:XeqY}, it follows from \eqref{noNewBvertex} and $|B|=|\Phi(B)|$ that 
\begin{equation}\label{BphiB}
B^*=\Phi(B).
\end{equation}

In particular, the terminals $a^*$ and $c^*$ lie in $\Phi(B)$, which implies that there is, for every terminal~$v \in \{a^*,c^*\}$, a $3$-fan $T\subseteq\Phi(H)$ from $v$ to proper branch vertices of some $10$-wall' $R\subseteq\Phi(H)$. 
Note that all proper branch vertices of $R$ lie in $\Phi(B)$ and thus outside $W^0$. Therefore, there is for every  $v \in \{a^*,c^*\}$ a path $Q_v$ that starts in $v$, that ends in another terminal and that is completely contained in $W$. Moreover, 
via the $3$-fan $T$, there is a path between the endvertices in $\Phi(H) - W^0$. (Observe that the paths $Q_{a^*},Q_{c^*}$ do not have to be disjoint, nor distinct.)

If $Q_{a^*}$ ends in $\{c^*,d^*\}$ or if $Q_{c^*}$ ends in $\{a^*,b^*\}$, we observe that $Q_{a^*}$ or $Q_{c^*}$ is an $\{a^*, b^*\}$--$\{c^*, d^*\}$~path in $\Phi(H) \cap W$ whose endvertices are in the same component of $\Phi(H) - W^0$ and we are done. Thus we may assume that $Q_{a^*}$ is an $a^*$--$b^*$~path and $Q_{c^*}$ is a $c^*$--$d^*$~path.
If $Q_{a^*}$ is disjoint from $Q_{c^*}$, they form an $(a^*$--$b^*$, $c^*$--$d^*$) linkage in $\Phi(H)\cap W$, which was what we wanted. 
Thus, we may assume that $Q_{a^*}$ intersects $Q_{c^*}$, which implies that $Q_{a^*}\cup Q_{c^*}$ contains an $a^*$--$c^*$~path $P$. We then apply Lemma~\ref{treew:lem:pathsBetweenBranchverticesInMStar} to $a^*,c^*$ in the role of $g^*,h^*$, and to some $a$--$c$~path $L$ in $M-e_1-e_2$. Note that $(V(L))^*$ is automatically disjoint from $B^*\sm\Phi(B)$, as the latter set is empty, by~\eqref{BphiB}.
The path we obtain from the lemma then shows that the endvertices of $P$ lie in the same component of $\Phi(H)-W^0$, and we are done.
\end{proof}

In the next lemma we will use planarity arguments. To this end, if $G$ is a planar graph that is drawn in the plane, ie, if $G\subseteq\mathbb R^2$, then we define the \emph{interior} $\intr(G)$ as the set $\mathbb R^2\sm F$, where $F$ is the outer face (the unbounded face) of $G$.

We have reached the final lemma that concludes the proof of Theorem~\ref{treew:theo:theoremAsWallSize}. 
\begin{lemma*linkage}
$\Phi(H)$ contains an ($a^*$--$b^*$, $c^*$--$d^*$) linkage in $W$.
\end{lemma*linkage}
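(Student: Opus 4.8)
The plan is to derive the linkage by contradiction, invoking Lemma~\ref{treew:lem:ab-cd-pathInWWithEndsInSameComponent}. So suppose $\Phi(H)$ contains no ($a^*$--$b^*$, $c^*$--$d^*$) linkage in $W$. Then that lemma hands us an $\{a^*,b^*\}$--$\{c^*,d^*\}$~path $N$ in $\Phi(H)\cap W$ whose two endvertices lie in the same component of $\Phi(H)-W^0$. Since $W$ is drawn so that $W^0$ sits inside the disc bounded by the outercycle of $M^*$ (the four terminals being the only vertices $W$ shares with $M^*$, by construction), the path $N$ is a "chord" of $M^*$ running through the interior of $W$, essentially connecting one of $e_1$'s endvertices to one of $e_2$'s endvertices. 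The idea is that outside $W^0$ we can reconstruct almost all of the wall' $M^*$ inside $\Phi(H)$, and then $N$ together with a path in (the repaired) $M^*$ joining its endvertices forms a cycle $D$; planarity of $\Phi(H)\subseteq Z$ then forces a contradiction because this cycle would have to separate pieces of the wall' that are provably connected in $\Phi(H)-W^0$, yet $\Phi(H)-W^0$ cannot cross $D$ for topological reasons.

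The first real step is the \emph{repair of $M$}. Using Lemma~\ref{treew:lem:XisSmall} we know $|B^*\sm\Phi(B)|\le 52$; since $M$ has $250\times 250$ size and hence far more than $52$ proper branch vertices, we may delete the (at most $52$) proper branch vertices of $M$ whose $*$-images are not hit by $\Phi(B)$, together with a bounded neighbourhood of them, and still retain a $10$-wall' worth of structure — in fact a large wall' subgrid $M_0$ of $M$ all of whose proper branch vertices $g$ satisfy $g^*\in\Phi(B)$, hence $g\in B_M$ after pulling back. For each subdivided edge $gh$ of $M_0$ with $g,h\in B_M$ we feed the path $L=gh$ of $H-e_1-e_2$ and a suitable $g^*$--$h^*$~path $P$ inside $M^*$ into Lemma~\ref{treew:lem:pathsBetweenBranchverticesInMStar} to obtain a $g^*$--$h^*$~path $Q$ in $\Phi(H)-W^0$ that is internally disjoint from all the relevant $B^*_M$-paths; choosing these $Q$'s so that they are pairwise internally disjoint (the internal-disjointness clause of the lemma, applied to the collection $\mathcal S$ of the other repaired edges, is exactly what makes this work) assembles a wall' $M^{\sharp}\subseteq\Phi(H)-W^0$ whose proper branch vertices are a large subset of $(V(M))^*$, and in particular $M^{\sharp}$ still contains the relevant parts of $M^*$ near $e_1$ and $e_2$ — so the four terminals, or at least vertices $70$-apart, are "anchored" on $M^{\sharp}$.

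The second step is the planarity contradiction. The path $N$ from Lemma~\ref{treew:lem:ab-cd-pathInWWithEndsInSameComponent} has one endvertex in $\{a^*,b^*\}$ and one in $\{c^*,d^*\}$; using the fact~\eqref{e1e2distance} that $e_1$ and $e_2$ are $70$-apart and that $M^{\sharp}$ carries enough of $M$, I would choose inside $M^{\sharp}$ a path $N'$ joining the two endvertices of $N$ that stays far from them (routing "the long way around" through the wall), so that $D=N\cup N'$ is a cycle. Because $N\subseteq W$ runs through the interior region bounded by the outercycle of $M^*$, while $N'$ runs along the wall, the cycle $D$ separates, in the plane, some proper branch vertex $p^*$ of $M^{\sharp}$ lying on one side from some proper branch vertex $q^*$ lying on the other — this is where the "$70$-apart" slack is spent, guaranteeing such $p^*,q^*$ exist on $M^{\sharp}$ strictly inside and strictly outside $D$, respectively, and not on $D$ itself. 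But by construction $p^*,q^*\in\Phi(B)$, and the hypothesis of Lemma~\ref{treew:lem:ab-cd-pathInWWithEndsInSameComponent} (the failure of the linkage) together with the repaired wall shows $p^*$ and $q^*$ lie in the same component of $\Phi(H)-W^0$; a path between them in $\Phi(H)-W^0$ avoids $W^0$, hence avoids $N\sm\{$endpoints$\}$, and avoids $N'$ if we picked $N'$ to be internally disjoint from that component's structure (again via the $\mathcal S$-clause of Lemma~\ref{treew:lem:pathsBetweenBranchverticesInMStar}) — so it is a $p^*$--$q^*$~path disjoint from $D$, contradicting the Jordan curve theorem since $\Phi(H)\subseteq Z$ is planar and there are no shortcuts.

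\textbf{Main obstacle.} The delicate point is not the topology but the bookkeeping that makes the topology legitimate: one must choose the repaired wall' $M^{\sharp}$, the separating cycle $D=N\cup N'$, and the connecting path in $\Phi(H)-W^0$ \emph{simultaneously consistently}, so that all the internal-disjointness hypotheses of Lemma~\ref{treew:lem:pathsBetweenBranchverticesInMStar} are met and so that the two witness branch vertices $p^*,q^*$ genuinely fall on opposite sides of $D$ and off $D$. Getting the constants to close — that $250\times 250$ leaves room for deleting $52$ bad branch vertices, for the $70$-apart buffer around $e_1,e_2$, and for the $10$-wall' demanded by the definition of $B$ — is the tedious but essential part; everything else is a fairly mechanical application of the earlier lemmas plus planarity.
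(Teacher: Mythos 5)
Your overall strategy matches the paper's: argue by contradiction, invoke Lemma~\ref{treew:lem:ab-cd-pathInWWithEndsInSameComponent} to get the path through $W$ together with a second connection outside $W^0$, then repair the wall' using Lemmas~\ref{treew:lem:XisSmall} and~\ref{treew:lem:pathsBetweenBranchverticesInMStar}, and close with a planarity argument. However, the final step of your plan has a concrete gap that the paper's argument is specifically engineered to avoid.

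You propose to form a cycle $D=N\cup N'$ where $N'$ is chosen ``inside $M^\sharp$ \ldots\ joining the two endvertices of $N$.'' But the endvertices of $N$ are one vertex of $\{a^*,b^*\}$ and one of $\{c^*,d^*\}$, and the repaired wall' $M^\sharp$ (the paper's $R$) is built from ${M'}^*$ where $M'\subseteq M-D-\{a,b,c,d\}$ is explicitly chosen to be disjoint from $a,b,c,d$. So the endvertices of $N$ are not in $M^\sharp$, and no such $N'$ exists inside $M^\sharp$. The missing ingredient is precisely the second path $Q\subseteq\Phi(H)-W^0$ that Lemma~\ref{treew:lem:ab-cd-pathInWWithEndsInSameComponent} hands you (you note the endvertices are ``in the same component of $\Phi(H)-W^0$'' but never extract the witnessing path and use it). The paper uses this $Q$ to bridge from the terminals into the repaired wall' $R$: it shows $Q$ must meet $R$, and that its first and last intersection points $q_1,q_2$ with $R$ are $8$-apart in $R$ (because $v_1,v_2$ are $70$-apart in $M^*$, the bricks of ${M'}^*$ containing them are $10$-apart, and passing to $R$ costs at most $2$). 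Then $q_1Qv_1\cup P\cup v_2Qq_2$ is a $q_1$--$q_2$~path in the planar graph $Q\cup P\cup R\subseteq\Phi(H)$ that meets $R$ only in its endvertices, which is impossible when $q_1,q_2$ are $8$-apart. This substitutes for your cycle-plus-separated-vertices argument and sidesteps the need to produce $p^*,q^*$ on opposite sides of $D$ (which you assert but do not establish). Your appeal to the $\mathcal S$-clause of Lemma~\ref{treew:lem:pathsBetweenBranchverticesInMStar} to make the $p^*$--$q^*$~path avoid $N'$ is also off: that clause guarantees mutual internal disjointness of the repaired subdivided edges, not avoidance of an arbitrary additional path.

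One small further point: you write ``$\Phi(H)\subseteq Z$ is planar and there are no shortcuts.'' The ``no shortcuts'' observation in the paper is about $Z-W^0$ (which is planar), not about $\Phi(H)$, and $Z$ itself is not planar; be careful about which graph's planarity is being used at each step.
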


\begin{proof}
Suppose that $\Phi(H)\cap W$ does not contain any ($a^*$--$b^*$, $c^*$--$d^*$) linkage.
Then Lemma~\ref{treew:lem:ab-cd-pathInWWithEndsInSameComponent} yields $v_1\in \{a^*, b^*\}$, $v_2\in \{c^*, d^*\}$
and  $v_1$--$v_2$~paths $P$ and $Q$ such that $P\subseteq \Phi(H) \cap W$ and $Q\subseteq \Phi(H) - W^0$.

Set $D=\{h \in B : h^* \not\in \Phi(B)\}$ and observe that Lemma~\ref{treew:lem:XisSmall} implies that $|D|\leq 52$. 
Every vertex is incident with at most one row and one column of the wall' $M$. Thus, there is a wall' $M'\subseteq M-D-\{a,b,c,d\}$ that contains all but at most $56$ rows and columns of $M$, and  that is disjoint from $D$ and from the terminals $a,b,c,d$. 
We write ${M'}^*\subseteq Z$ for the subwall' of $M^*$ that contains all images of the branch vertices of $M'$ under~$*$.

As all proper branch vertices of $M'$ are in $B_M$ and as ${M'}^*$ is disjoint from $B^*\sm\Phi(B)$ we can  apply Lemma~\ref{treew:lem:pathsBetweenBranchverticesInMStar} to every (subdivided) edge $gh$ of $M'$ to see that there is a $g^*$--$h^*$~path in $\Phi(H)-W^0$.
Moreover, as such a path is a $B^*_M$-path (and thus in $\mathcal{S}$ with respect to the lemma), the obtained paths are all internally disjoint.
Replacing the subdivided edges of ${M'}^*$ one by one in this way, we obtain a wall' $R$ in $\Phi(H)-W^0$ whose proper branch vertices are identical with those from ${M'}^*$. 
In particular, for every row (resp.\ for every column) of ${M'}^*$ there is a row (resp.\ a column) of $R$ with the same proper branch vertices.

We note for later that
\begin{equation}\label{Rphi}
R\subseteq  \Phi(H)-W
\end{equation}
We make a second observation. 
The graph $Z-W^0$ is planar as $H$ is planar, and, in what follows, we consider a fixed drawing of $Z-W^0$. 
Then, the interior $\intr(S)$ of any brick $S$ of $M^*$ is well-defined.
We may assume that $Z-W^0$ is drawn in such a way that no brick interior contains the outercycle of $M^*$. %\mymargin{Cite when arguing about brick adjacencies} - done, follows right after
We use this to observe that if $S'$ is a brick of ${M'}^*$ and if $S$ is the corresponding brick of $R$ with the same proper branch vertices  then any vertex in $\intr(S')$ lies  in the interior $\intr(S)$ or in the interior of a brick of $R$ that is adjacent to $S$, ie, that shares a subdivided edge with $S$.

Recall the $v_1$--$v_2$~path $Q$ contained in $\Phi(H) - W^0$.
We claim:
\begin{equation}\label{Rapart}
\begin{minipage}[c]{0.8\textwidth}\em
$Q$ meets $R$, and if $q_1$ is its first and $q_2$ its last vertex in $R$ then $q_1,q_2$ are $8$-apart in $R$.
\end{minipage}\ignorespacesafterend 
\end{equation} 
As each pair of one vertex from $\{a,b\}$ and one of $\{c,d\}$ is $70$-apart in $M$, it follows that $v_1,v_2$ are $70$-apart in $M^*$. (Recall that $M^*$ is a subdivision of $M-e_1-e_2$ in $Z-W^0$.) As every path in $M^*$ from $v_1$ or from $v_2$ to the outercycle of $M^*$ meets at least $70$ rows or columns, and as ${M'}^*$ contains all but $56$ rows and all but $56$ columns of $M^*$ it follows that there are bricks $S'_1,S'_2$ of  ${M'}^*$ such that $v_i\in\intr(S'_i)$ for $i=1,2$.

Consider a path $Q'\subseteq {M'}^*$ from a vertex of $S'_1$ to a vertex of $S'_2$ and suppose that $Q'$ meets ${M'}^*$ fewer than 10 times. Then follow $Q$, which is a path in $Z-W^0$, from $v_1$ to the first vertex in $S'_1$,
then along $S'_1$ to the first vertex of $Q'$, then along $Q'$ to $S'_2$, from there to the last vertex of $Q$ in $S'_2$ and along $Q$ to $v_2$. The resulting $v_1$--$v_2$ path $Q''\subseteq Z-W^0$ meets fewer than $14$ rows and columns of ${M'}^*$ (each of the bricks $S'_1$ and $S'_2$ may contribute at most two more rows and columns). As ${M'}^*$ contains all but $56$ rows and columns of $M^*$ we see that $Q''$ meets fewer than $70$ rows and columns of $M^*$, which is impossible as $v_1,v_2$ are $70$-apart in $M^*$.  
In a similar way, we see that each path from $v_1$ or from $v_2$ to the outercycle of ${M'}^*$ meets $10$ rows or columns of ${M'}^*$.
Therefore, $S'_1,S'_2$ are $10$-apart in ${M'}^*$.

As we had observed that the interior of each brick of $R$ is contained in the interior of the corresponding brick in ${M'}^*$
together with the interiors of adjacent bricks, it follows that there are bricks $S_1$ and $S_2$ of $R$ such that $v_i\in\intr(S_i)$ for $i=1,2$ and such that $S_1,S_2$ are $8$-apart in $R$. 

As a consequence, the path $Q$, which is entirely contained in the plane graph $Z-W^0$, meets $R$ (in at least eight vertices). Denote by $q_1$ the first vertex of $Q$ in $R$, and let $q_2$ be the last vertex of $Q$ in $R$. Then $q_1$ lies in the brick $S_1$, and $q_2$ lies in $S_2$. Therefore, $q_1,q_2$ are $8$-apart in $R$. This proves~\eqref{Rapart}. 

Recall the $v_1$--$v_2$~path $P$ contained in $\Phi(H)\cap W$. As $H$ is planar, and as as $Q\cup P\cup R\subseteq \Phi(H)$, it follows that $Q\cup P\cup R$ is planar, too. Consider $q_1Qv_1\cup P \cup v_2Qq_2$: this is a $q_1$--$q_2$~path that meets the wall' $R$ only in its endvertices since $P\subseteq W$, while $R$ is disjoint from $W$, by~\eqref{Rphi}.
However, $q_1,q_2$ are $8$-apart, by~\eqref{Rapart}. Clearly, this is impossible in a planar graph. The final contradiction proves the lemma.
\end{proof}

\bibliographystyle{amsplain}
%\bibliography{graphs}

\bibliography{chapters/literature}{}

\end{document}